\newtheorem{thm}{Theorem}[section]
\newtheorem{prop}[thm]{Proposition}
\newtheorem{lem}[thm]{Lemma}
\newtheorem{cor}[thm]{Corollary}
\newtheorem*{thmA}{Theorem A}
\newtheorem*{thmB}{Theorem B}
\newtheorem*{thmC}{Theorem C}
\theoremstyle{definition}
\newtheorem{dfn}[thm]{Definition}
\def\Ac{\mathcal{A}}  \def\Cc{\mathcal{C}}
\def\C{\mathbb{C}}  \def\Fc{\mathcal{F}}
 \def\al{\alpha}
  \def\Mc{\mathcal{M}}
\def\Pc{\mathcal{P}}  \def\Oc{\mathcal{O}}
  \def\Tc{\mathcal{T}}
\def\R{\mathbb{R}}
\def\Uc{\mathcal{U}} \def\Vc{\mathcal{V}} \def\Wc{\mathcal{W}}
\def\Z{\mathbb{Z}}
\renewcommand\emptyset{\varnothing}
\newcommand{\sm}{\setminus}
\def\eps{\varepsilon}
\def\al{\alpha}
\def\be{\beta}
\def\om{\omega}
\def\la{\lambda}
\def\si{\sigma}
\def\vp{\varphi}
\def\ol{\overline}
\def\thu{\mathrm{TH}}
\def\cu{\mathrm{CU}}
\def\uc{\mathbb{S}}
\def\bd{\mathrm{Bd}}
\def\le{\leqslant}
\def\ge{\geqslant}
\def\0{\emptyset}
\def\disk{\mathbb{D}}
\def\cdisk{\ol{\mathbb{D}}}
\def\phd{\mathrm{PHD}}
\begin{document}
\date{November 8, 2014; revised July 21, 2015; next revision November 30, 2015}
\title[Quadratic-like dynamics of cubic polynomials]
{Quadratic-like dynamics of cubic polynomials}

\author[A.~Blokh]{Alexander~Blokh}

\author[L.~Oversteegen]{Lex Oversteegen}

\author[R.~Ptacek]{Ross~Ptacek}

\author[V.~Timorin]{Vladlen~Timorin}

\address[Alexander~Blokh, Lex~Oversteegen and Ross~Ptacek]
{Department of Mathematics\\ University of Alabama at Birmingham\\
Birmingham, AL 35294}

\address[Vladlen~Timorin]
{Faculty of Mathematics\\
Laboratory of Algebraic Geometry and its Applications\\
Higher School of Economics\\
Vavilova St. 7, 112312 Moscow, Russia }

\address[Vladlen~Timorin]
{Independent University of Moscow\\
Bolshoy Vlasyevskiy Pereulok 11, 119002 Moscow, Russia}

\email[Alexander~Blokh]{ablokh@math.uab.edu}
\email[Lex~Oversteegen]{overstee@math.uab.edu}
\email[Ross~Ptacek]{rptacek@uab.edu}
\email[Vladlen~Timorin]{vtimorin@hse.ru}

\subjclass[2010]{Primary 37F45; Secondary 37F10, 37F20, 37F50}

\keywords{Complex dynamics; Julia set; polynomial-like maps;
laminations}


\begin{abstract}
A small perturbation of a quadratic polynomial $f$ with a
non-repelling fixed point gives a polynomial $g$ with an attracting
fixed point and a Jordan curve Julia set, on which $g$ acts like
angle doubling. However, there are cubic polynomials with a
non-repelling fixed point, for which no perturbation results into a
polynomial with Jordan curve Julia set. Motivated by the study of
the closure of the Cubic Principal Hyperbolic Domain, we describe
such polynomials in terms of their quadratic-like restrictions.
\end{abstract}

\maketitle

\section{Introduction}
In this paper, we study topological dynamics of complex cubic
polynomials. We denote the \emph{Julia set} of a polynomial $f$ by
$J(f)$ and the \emph{filled Julia set} of $f$ by $K(f)$. Let us
recall classical facts about quadratic polynomials.
The Mandelbrot set $\Mc_2$, perhaps the most well-known mathematical
set outside of the mathematical community, can be defined as the set
of all complex numbers $c$ such that the sequence
$$
c,\quad c^2+c,\quad (c^2+c)^2+c,\dots
$$
is bounded. The numbers $c$ label polynomials $z^2+c$. Every
quadratic polynomial can be reduced to this form by an affine
coordinate change.

By definition, $c\in \Mc_2$ if the orbit of $0$ under $z\mapsto
z^2+c$ is bounded. What is so special about the point 0? It is the
only critical point of the polynomial $z^2+c$ in $\C$. A critical
point of a complex polynomial has a meaning in the realm of
topological dynamics. Namely, this is a point that does not have a
neighborhood, on which the map is one-to-one.
Generally, the behavior of critical orbits
to a large extent determines
the dynamics of other orbits.
For example, by a
classical theorem of Fatou and Julia, $c\in \Mc_2$ if and only if
the filled Julia set of $z^2+c$
$$
K(z^2+c)=\{z\in\C\,|\, z,\ z^2+c,\ (z^2+c)^2+c,\
\dots\not\to\infty\}
$$
is connected. If $c\not\in \Mc_2$, then the set $K(z^2+c)$ is a
Cantor set.

The Mandelbrot set has a complicated fractal shape. Yet one can see
many components of the interior of $\Mc_2$ bounded by real analytic
curves (in fact, ovals of real algebraic curves). The central part
of the Mandelbrot set, the \emph{Principal Hyperbolic Domain}
$\phd_2$, is bounded by a cardioid (a curve, whose shape resembles
that of a heart). This cardioid is called the \emph{Main Cardioid}.
By definition, the Principal Hyperbolic Domain $\phd_2$ consists of
all parameter values $c$ such that the polynomial $z^2+c$ is
hyperbolic, and the set $K(z^2+c)$ is a Jordan disk (a polynomial of
any degree is said to be {\em hyperbolic} if the orbits of all its
critical points converge to attracting cycles). Equivalently,
$c\in\phd_2$ if and only if $z^2+c$ has an attracting fixed point.

The closure of $\phd_2$ consists of all parameter values $c$ such
that $z^2+c$ has a non-repelling fixed point. As follows from the
Douady--Hubbard landing theorem \cite{DH,Hu} and
Pommerenke-Levin-Yoccoz inequality \cite{Hu,lev91,pom86,pet93}, the
Mandelbrot set itself can be thought of as the union of the main
cardioid and \emph{limbs} (connected components of
$\Mc_2\sm\ol{\phd}_2$) parameterized by reduced rational fractions
$p/q\in (0,1)$. This motivates our study of cubic analogs of
$\phd_2$ started in \cite{bopt14} and continued in \cite{bopt14a}.
We begin our discussion by describing some results of these two
papers.

Complex numbers $c$ are in one-to-one correspondence with affine
conjugacy classes of quadratic polynomials (throughout we call
affine conjugacy classes of polynomials \emph{classes} of
polynomials). Thus, a higher-degree analog of the set $\Mc_2$ is the
\emph{degree $d$ connectedness locus} $\Mc_d$, i.e., the set of
classes of degree $d$ polynomials $f$, all of whose critical points
do not escape (equivalently, whose Julia set $J(f)=\bd(K(f))$ is
connected). The {\em Principal Hyperbolic Domain} $\phd_d$ of
$\Mc_d$ is the set of classes of hyperbolic degree $d$ polynomials
with Jordan curve Julia sets; the class $[f]$ of a degree $d$
polynomial $f$ belongs to $\phd_d$ if and only if all critical
points of $f$ are in the immediate attracting basin of the same
attracting (or super-attracting) fixed point. In \cite{bopt14} we
describe properties of polynomials $f$ such that $[f]\in \ol\phd_d$.

\begin{thm}[Theorem A \cite{bopt14}]\label{t:prophd}
If $[f]\in \ol{\phd}_d$, then $f$ has a fixed non-repelling
point, no repelling periodic cutpoints in $J(f)$,  and all its
non-repelling periodic points, except at most one fixed point, have
multiplier 1.
\end{thm}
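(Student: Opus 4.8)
\emph{Setup and the non-repelling fixed point.}
The plan is to prove all three assertions by approximation: pick representatives $f_n\to f$ with $[f_n]\in\phd_d$ and transport to the limit the rigidity enjoyed by maps in $\phd_d$. By the characterization of $\phd_d$ recalled above, each $f_n$ is hyperbolic, its $d-1$ critical points all lie in the immediate basin $U_n$ of a single attracting or super-attracting fixed point $a_n$, and $J(f_n)$ is a Jordan curve; consequently $K(f_n)$ is a closed Jordan disk, exactly one external ray lands at every point of $J(f_n)$, and --- since each attracting cycle of a polynomial carries a critical point in its immediate basin --- $\{a_n\}$ is the \emph{only} non-repelling cycle of $f_n$. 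Since fixed points vary continuously (with multiplicity), after passing to a subsequence $a_n\to a$, a fixed point of $f$; if $a$ is a simple solution of $f(z)=z$ then $f'(a)=\lim f_n'(a_n)$ has modulus $\le 1$, and if it is a multiple solution then $f'(a)=1$, so in either case $a$ is non-repelling. This is the first assertion.

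\emph{No repelling periodic cutpoints.}
Suppose $p\in J(f)$ is a repelling periodic cutpoint, of period $k$. A finite, positive number of dynamic rays land at $p$, and since a periodic cutpoint of a connected Julia set has at least two rays landing at it, at least two do. As $(f^k)'(p)\ne 1$, the point $p$ is a simple root of $f^k(z)-z$ and has a repelling period-$k$ continuation $p_n\to p$ for $f_n$; by the stability of repelling periodic orbits together with the rays landing at them (Douady--Hubbard), the two rays landing at $p$ also land at $p_n$ for large $n$. This contradicts the fact that exactly one external ray lands at each point of the Jordan curve $J(f_n)$, which proves the second assertion.

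\emph{Non-repelling cycles with multiplier $\ne 1$: the easy cases.}
Let $\Cc$ be a non-repelling cycle of $f$ of period $k$ and multiplier $\rho\ne 1$; the goal is to show $\Cc$ is a fixed point and that at most one such cycle occurs. Since $\rho\ne 1$, the points of $\Cc$ are simple roots of $f^k(z)-z$, so $\Cc$ admits a period-$k$ continuation $\Cc_n$ for $f_n$ with $\mathrm{mult}(\Cc_n)\to\rho$; as $f_n$ is hyperbolic, $\Cc_n$ is either attracting or repelling. If $\Cc_n$ is attracting it must be the unique non-repelling cycle $\{a_n\}$, whence $k=1$ and $\Cc=\{a\}$; uniqueness of $a_n$ for each $n$ then gives at most one such $\Cc$. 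If $\rho$ is a root of unity $\ne 1$, then $\Cc$ is parabolic with nonzero combinatorial rotation number, so at least two rays land at each of its points; if $\Cc_n$ were repelling, those rays would land at $\Cc_n$ as well (stability of landing rays across a parabolic bifurcation), again contradicting that $J(f_n)$ is a Jordan curve. Hence in the rational case too $\Cc=\{a\}$, so every attracting or parabolic non-repelling cycle with multiplier $\ne 1$ coincides with the single fixed point $a$.

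\emph{The remaining case, and where the difficulty lies.}
The case I expect to be hardest is $|\rho|=1$ with $\rho$ \emph{not} a root of unity and the continuation $\Cc_n$ repelling for every large $n$: then $\Cc$ is a Siegel or Cremer cycle of $f$ disjoint from $\{a\}$, and the ray argument is unavailable. Instead one must track critical orbits. Each critical point $c_j$ of $f$ is a limit $c_j=\lim_n c_{j,n}$ with $c_{j,n}\in U_n$, and since $U_n$ is forward invariant with the $f_n$-orbit of $c_{j,n}$ converging to $a_n$, the forward $f$-orbit of $c_j$ --- hence $\omega(c_j)$ --- lies in $\limsup_n\ol{U_n}$. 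On the other hand, by the classical description of the boundaries of rotation domains and of Cremer points (Fatou, Ma\~n\'e), the rotation locus of $\Cc$ lies inside $\omega(c_j)$ for some $j$; one then wants to conclude that $\limsup_n\ol{U_n}$ cannot reach that locus, a contradiction. The obstruction to making this immediate is that $\limsup_n\ol{U_n}$ may be strictly larger than the closure of the Fatou components of $f$ attached to $a$, because the basins $U_n$ can implode (parabolic or Siegel implosion) when the multiplier at $a_n$ approaches the unit circle. Overcoming this is where the finer structure of $\ol{\phd}_d$ must enter --- concretely, the invariant laminational (``pinched-disk'') model associated with a map in $\ol{\phd}_d$, which cannot carry a nontrivial rotational subsystem disjoint from its distinguished fixed point. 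Granting that, every non-repelling cycle of $f$ is either $\{a\}$ or has multiplier $1$, which is exactly the last assertion.
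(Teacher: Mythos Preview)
The present paper does not prove Theorem~\ref{t:prophd}: it is quoted as Theorem~A of \cite{bopt14} and used as background, with the remark that the version stated here follows by ``literally repeating the same arguments'' as in \cite{bopt14}. So there is no proof in this paper to compare your attempt against; the actual argument lives in \cite{bopt14} and, as you correctly anticipate in your last paragraph, relies on the laminational machinery developed there rather than on a bare perturbation argument.

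On the substance of your attempt: the first two assertions are handled cleanly and correctly by continuation of periodic points and the Douady--Hubbard stability of rays landing at repelling cycles. For the third assertion, the cases where the continuation $\Cc_n$ is attracting, and the parabolic case with nontrivial rotation number, are essentially fine (the ray-stability claim across a parabolic-to-repelling bifurcation is standard, though it deserves a precise reference). The genuine gap is exactly where you flag it: a Cremer or Siegel cycle $\Cc$ disjoint from $\{a\}$ whose continuation $\Cc_n$ is repelling for all $n$. Your sketch via $\omega$-limit sets of critical points and Ma\~n\'e's theorem identifies the right obstruction---implosion of the basins $U_n$---but then you write ``Granting that'' and invoke an unproved statement about the laminational model. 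That is not a proof; it is a correct diagnosis that the perturbation-and-ray strategy alone does not close the argument, and that one must import the structural results of \cite{bopt14} on invariant laminations associated with $\ol{\phd}_d$. In short, your proposal is an honest outline that is complete for two of the three claims and explicitly incomplete for the hardest part of the third.
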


Actually, in \cite{bopt14} we claim that all non-repelling periodic
\emph{cutpoints} in the Julia set $J(f)$, except perhaps $0$, have
multiplier 1; still, literally repeating the same arguments one
proves the version of \cite[Theorem A]{bopt14} given by
Theorem~\ref{t:prophd}. This motivates Definition~\ref{d:cubio}, in
which we define a special set $\cu$ such that $\ol\phd_3\subset
\cu$.

\begin{dfn}[\cite{bopt14}]\label{d:cubio}
Let $\cu$ be the family of classes of cubic polynomials $f$ with
connected $J(f)$ such that $f$ has a non-repelling fixed point, no
repelling periodic cutpoints in $J(f)$,  and all its non-repelling
periodic points, except at most one fixed point, have multiplier 1. The
family $\cu$ is called the \emph{Main Cubioid}. 
\end{dfn}

Let $\Fc$ be the space of polynomials
$$
f_{\lambda,b}(z)=\lambda z+b z^2+z^3,\quad \lambda\in \C,\quad b\in \C
$$
parameterized by pairs $(\lambda,b)$ of complex numbers. An affine
change of variables reduces any cubic polynomial $f$ to the form
$f_{\lambda,b}$. The point $0$ is fixed for every $f_{\lambda,b}\in
\Fc$. The set of all polynomials $f\in\Fc$ such that $0$ is
non-repelling for $f$ is denoted by $\Fc_{nr}$. Define the
\emph{$\la$-slice} $\Fc_\lambda$ of $\Fc$ as the space of all
polynomials $g\in\Fc$ with $g'(0)=\lambda$. The space $\Fc$ maps onto
the space of classes of all cubic polynomials with a fixed point of
multiplier $\lambda$ as a finite branched covering equivalent to the
map $b\mapsto a=b^2$, i.e., classes of polynomials
$f_{\la,b}\in\Fc_\la$ are in one-to-one correspondence with the values
of $a$. Thus, if we talk about, say, points $[f]$ of $\Mc_3$, then it
suffices to take $f\in\Fc_\lambda$ for some $\lambda$.

Let $J(f)$ be connected. In \cite{lyu83, MSS}, the notion of
\emph{$J$-stability} was introduced for any holomorphic family of
rational functions: a map is \emph{$J$-stable} if its Julia set admits
an equivariant holomorphic motion over a neighborhood of the map in the
given family. We say that $f\in \Fc_\la$ is \emph{stable} if it is
$J$-stable with respect to $\Fc_\lambda$ with $\lambda=f'(0)$,
otherwise we say that $f$ is \emph{unstable}. The set $\Fc^{st}_\la$ of
all stable polynomials $f\in \Fc_\la$ is an open subset of $\Fc_\la$. A
component of $\Fc^{st}_\la$ is called a \emph{$(\lambda$-$)$stable
component} or a \emph{domain of $(\la$-$)$stability}. It is easy to see
that, given $\la$, the polynomial $f_{\lambda,b}$ has a disconnected
Julia set if $|b|$ is sufficiently big. Hence, if $f=f_{\lambda,b}$ is
stable and $J(f)$ is connected, then its domain of stability is
bounded. For any subset $\Uc\subset\Fc$, we let $[\Uc]$ stand for the
set of classes $[f]$ of all polynomials $f\in\Uc$. If $|\la|\le 1$,
then we write $\Pc_\la$ for the set of all polynomials $f\in\Fc_\la$
such that $[f]\in\ol\phd_3$.

\begin{thm}[\cite{bopt14}]\label{t:extend}
Suppose that $\Uc$ is a bounded stable component in $\Fc_\la$,
$|\la|\le 1$, such that $[\bd(\Uc)]\subset\ol\phd_3$.
Then $[\Uc]\subset \cu$.
Thus, if $\Tc_\la$ is the union of $\Pc_\la$ and all $\lambda$-stable
components, whose boundaries are contained in $\Pc_\la$, then
$[\Tc_\la]\subset \cu$.
\end{thm}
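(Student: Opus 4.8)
The plan is to prove the first assertion — that $[f]\in\cu$ for every $f\in\Uc$ — from which the second follows: for $f\in\Pc_\la$ we have $[f]\in\ol\phd_3\subseteq\cu$ by Theorem~\ref{t:prophd}, while a $\la$-stable component $\Uc$ with $\bd\Uc\subseteq\Pc_\la$ is automatically bounded (its boundary lies in $\ol\phd_3\subseteq\Mc_3$, hence in a disk $\{|b|\le R\}$ since $f_{\la,b}$ has a disconnected Julia set for $|b|$ large, and one checks that a $\la$-stable component cannot then be unbounded), so the first assertion applies. So fix a bounded $\la$-stable component $\Uc$ with $[\bd\Uc]\subseteq\ol\phd_3$ and $f=f_{\la,b}\in\Uc$.

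Two defining conditions of $\cu$ come for free. First, $0$ is a fixed point of $f$ with multiplier $\la$, $|\la|\le1$, so $f$ has a non-repelling fixed point. Second, the boundary of the connectedness locus $\Mc_3\cap\Fc_\la$ lies in the bifurcation locus of $\Fc_\la$ (crossing it makes the Julia set jump), so the stable component $\Uc$ lies entirely inside $\Mc_3$ or entirely in its complement; as $\Uc$ is bounded with $[\bd\Uc]\subseteq\Mc_3$, the escape case is impossible and $J(f)$ is connected. Next I would record the consequences of $J$-stability (see \cite{lyu83, MSS}): $f$ has no parabolic, Siegel, or Cremer cycles, every periodic point of $f$ in $J(f)$ is repelling, and the multiplier of any cycle of $f$ is holomorphic on $\Uc$ and never has modulus $1$ there. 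In particular the remaining two conditions of $\cu$ simplify: since an attracting multiplier is never equal to $1$, "all non-repelling periodic points except at most one fixed point have multiplier $1$" becomes "$f$ has at most one attracting cycle, and any such cycle is a fixed point"; and "no repelling periodic cutpoints" becomes "$J(f)$ has no periodic cutpoints at all".

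The heart of the matter is these last two statements, and for them I would use the location of $\bd\Uc$ more sharply. Since $\phd_3$ is open and contained in the stability locus, no point of $\bd\Uc$ lies in $\phd_3$ — such a point would be interior to the stability locus and hence lie in $\Uc$, not on $\bd\Uc$ — so $[\bd\Uc]\subseteq\bd\phd_3$. Thus each $f_0\in\bd\Uc$ is a limit of polynomials $g_n$ with $[g_n]\in\phd_3$; each $g_n$ has a Jordan curve Julia set (no cutpoints), exactly one attracting cycle, which is a fixed point whose immediate basin contains both critical points, and all other cycles repelling. To prove that $J(f)$ has no periodic cutpoint: if $p$ were one, then at least two periodic external rays land at $p$; I would track the corresponding ray portrait as the parameter moves within $\Uc$ to $f_0$ and then on to the nearby $g_n$ — using stability of the landing of periodic rays at repelling periodic points, and analyzing the parabolic bifurcation at $f_0$ directly — to conclude that $J(g_n)$ acquires a cutpoint for large $n$, contradicting that it is a Jordan curve. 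To control attracting cycles: if $f$ had an attracting cycle of period $>1$, or two attracting cycles, then tracking such a cycle and its immediate basin as $f\to f_0$ within $\Uc$ and on to $g_n$ would force $g_n$ to have either an attracting cycle of period $>1$ or a second attracting cycle, contradicting the structure of $\phd_3$ — unless the multiplier of the cycle reaches the unit circle on $\bd\Uc$, so that the cycle degenerates into a parabolic cycle of $f_0$; that possibility I would exclude using Theorem~\ref{t:prophd} (all non-repelling cycles of $f_0$ except at most one fixed point have multiplier $1$) together with the rigidity of the basin configuration of the approximating $g_n$.

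The main obstacle is exactly this transfer of combinatorial data — ray portraits and basin configurations — across the unstable boundary $\bd\Uc$: the degeneration of a repelling periodic cutpoint, or of an extra attracting cycle, of $f\in\Uc$ into a parabolic cycle of multiplier $1$ on $\bd\Uc$ is consistent with the three bare conditions of Theorem~\ref{t:prophd} and must be ruled out using the finer description of $\ol\phd_3$ from \cite{bopt14} — in particular that its members genuinely arise as limits of $\phd_3$ polynomials with their rigid basin structure — and a careful treatment of parabolic implosion (how periodic rays are redistributed through the bifurcation at $f_0$), making sure that no relevant external ray crashes into a precritical point along the way.
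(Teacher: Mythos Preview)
This theorem is not proved in the present paper; it is quoted from \cite{bopt14} (note the citation attached to the theorem environment). There is therefore no proof here to compare your proposal against. What the present paper does is use this result as a black box and sharpen its conclusion via Lemma~\ref{l:extend-th-i} and Corollary~\ref{c:bdd-stab}.

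As for the proposal itself: you have correctly isolated the two nontrivial conditions and you are candid that the ``transfer across $\bd\Uc$'' step is where the real work lies. But that step is not actually carried out --- you say you would use ``the finer description of $\ol\phd_3$ from \cite{bopt14}'' together with a careful parabolic-implosion analysis, which is close to circular since Theorem~\ref{t:extend} \emph{is} one of the main results of \cite{bopt14}. A genuine proof has to produce that finer description, and the tools in \cite{bopt14} (laminational combinatorics for the closure of $\phd_3$) are rather different from the ray-tracking sketch you give. There is also a slip in your reduction via $J$-stability: the assertion that ``the multiplier of any cycle of $f$ is holomorphic on $\Uc$ and never has modulus $1$ there'' is false for the fixed point $0$ when $|\la|=1$, since its multiplier is identically $\la$ on $\Fc_\la$. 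Consequently, when $|\la|=1$ the polynomial $f$ may well have $0$ as a Siegel or Cremer point while remaining $\la$-stable, and your simplification ``every periodic point of $f$ in $J(f)$ is repelling'' and the ensuing recasting of the $\cu$ conditions require adjustment in that case.
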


For a compactum $X\subset \C$, let the \emph{topological hull
$\thu(X)$} of $X$ be the complement of the unbounded component of
$\C\setminus X$. Lemma~\ref{l:extend-th-i} follows from
Corollary~\ref{c:bdd-stab} and gives an equivalent description of
$\Tc_\la$.


\begin{lem}\label{l:extend-th-i}
Any component $\Wc$ of\, $\thu(\Pc_\la)\sm\Pc_\la$ consists of stable
maps. Moreover, $\Tc_\la=\thu(\Pc_\la)$ and $\thu(\Pc_\la)\subset \cu$.
\end{lem}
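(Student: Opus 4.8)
The plan is to bootstrap from Theorem~\ref{t:extend}, whose conclusion already gives $[\Tc_\la]\subset\cu$, so the real content is the identification $\Tc_\la=\thu(\Pc_\la)$ together with the stability claim. First I would unwind the definitions: $\Pc_\la$ is closed in $\Fc_\la$ (it is the preimage of the closed set $\ol\phd_3$ under the continuous passage to classes, intersected with the $\la$-slice), so $\C\sm\Pc_\la$, viewed inside the $a$-parameter plane identified with $\Fc_\la$ via $a=b^2$, is open and its bounded complementary components are well defined. Fix such a bounded component $\Vc$ of $\thu(\Pc_\la)\sm\Pc_\la$. The boundary $\bd(\Vc)$ is contained in $\Pc_\la\subset\ol\phd_3$, by Corollary~\ref{c:bdd-stab} one knows that polynomials on $\bd(\Pc_\la)$, and hence on $\bd(\Vc)$, have connected Julia sets; since $J$-instability can only occur on a set that, restricted to a slice, is nowhere dense-ish, I would argue that $\Vc$ cannot consist entirely of unstable maps, so $\Vc$ meets $\Fc^{st}_\la$.

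Next I would show $\Vc$ is actually contained in a single bounded stable component $\Uc$ with $[\bd(\Uc)]\subset\ol\phd_3$, so that Theorem~\ref{t:extend} applies verbatim and gives $[\Vc]\subset\cu$ as well as $\Vc\subset\Fc^{st}_\la$. The key point is that $\bd(\Vc)\subset\Pc_\la$ separates $\Vc$ from infinity, the maps $f_{\la,b}$ with $|b|$ large have disconnected Julia set and lie outside $\thu(\Pc_\la)$, and stable components are open; a connectedness/separation argument in the plane shows the stable component through any point of $\Vc\cap\Fc^{st}_\la$ must have its boundary inside $\thu(\Pc_\la)\sm(\thu(\Pc_\la)\sm\Pc_\la)=\Pc_\la$, hence $[\bd(\Uc)]\subset\ol\phd_3$, and then $\Uc\subset\Tc_\la$ by the definition of $\Tc_\la$, while $\Uc\subset\thu(\Pc_\la)$ since $\Uc$ is a bounded region bounded by a subset of $\Pc_\la$. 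Running this for every component $\Vc$ gives both $\thu(\Pc_\la)\sm\Pc_\la\subset\Fc^{st}_\la$ and $\thu(\Pc_\la)\subset\Tc_\la$.

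For the reverse inclusion $\Tc_\la\subset\thu(\Pc_\la)$: $\Pc_\la$ itself lies in $\thu(\Pc_\la)$ trivially, and any $\la$-stable component $\Uc$ with $\bd(\Uc)\subset\Pc_\la$ is a bounded open set whose frontier lies in $\Pc_\la\subset\thu(\Pc_\la)$; since $\thu(\Pc_\la)$ is a full compact set (its complement is connected and unbounded), a bounded open set with boundary inside it must itself be inside it — otherwise a point of $\Uc$ outside $\thu(\Pc_\la)$ would be joined to infinity by a path in $\C\sm\thu(\Pc_\la)\subset\C\sm\Pc_\la\supset\C\sm\bd(\Uc)$, which is impossible once one checks the path cannot re-enter $\Uc$. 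Hence $\Tc_\la\subset\thu(\Pc_\la)$, giving equality, and the final inclusion $\thu(\Pc_\la)\subset\cu$ then follows from $\thu(\Pc_\la)=\Tc_\la$ and Theorem~\ref{t:extend}. The main obstacle I anticipate is the planar-topology bookkeeping in the two separation arguments — verifying that complementary components of $\Pc_\la$ are genuinely trapped between $\Pc_\la$ and the large-$|b|$ disconnectedness locus, and that stable components straddling $\bd(\Vc)$ do not occur — rather than any new dynamical input, since all the hard dynamics is already packaged in Theorem~\ref{t:extend} and Corollary~\ref{c:bdd-stab}.
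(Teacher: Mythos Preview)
Your overall strategy has a circularity that is not just bookkeeping. You pick a stable map in a bounded component $\Vc$ of $\thu(\Pc_\la)\sm\Pc_\la$, let $\Uc$ be its stable component, and then assert that $\bd(\Uc)\subset\thu(\Pc_\la)\sm(\thu(\Pc_\la)\sm\Pc_\la)=\Pc_\la$. The second half of that containment says $\bd(\Uc)$ avoids $\thu(\Pc_\la)\sm\Pc_\la$; since $\bd(\Uc)$ consists of unstable maps, this amounts to knowing that $\thu(\Pc_\la)\sm\Pc_\la$ contains no unstable maps --- which is exactly the first claim of the lemma you are trying to prove. Nothing in pure planar topology prevents $\Uc$ from being a proper open subset of $\Vc$ whose frontier meets $\Vc$ in unstable points, and your separation argument against the large-$|b|$ disconnectedness locus only rules out $\Uc$ escaping $\thu(\Pc_\la)$, not $\bd(\Uc)$ landing inside $\Vc$ itself. (Your citation of Corollary~\ref{c:bdd-stab} for ``connected Julia sets on $\bd(\Pc_\la)$'' is also misplaced; that is just the definition of $\Pc_\la\subset\Mc_3$.)

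The paper closes this gap with a genuinely different, direct argument that you should use instead. In the proof of Corollary~\ref{c:bdd-stab} one shows that every map in a bounded component $\Wc$ of $\Fc_\la\sm\Pc_\la$ is stable by proving that \emph{both} critical points are passive there: on $\bd(\Wc)\subset\Pc_\la$ the maps lie in the connectedness locus, so the functions $g\mapsto g^{\circ n}(\om_i(g))$ are uniformly bounded on $\bd(\Wc)$; the maximum principle then bounds them uniformly on all of $\Wc$, forcing normality. This establishes the first claim of Lemma~\ref{l:extend-th-i} outright, without ever needing to locate $\bd(\Uc)$. Once every bounded component $\Wc$ is known to consist of stable maps, the identification $\Tc_\la=\thu(\Pc_\la)$ follows by combining this with Theorem~\ref{t:eqv} and Corollary~\ref{c:unbound} (a stable component containing an immediately renormalizable map cannot meet $\Pc_\la$, so it cannot straddle $\bd(\Wc)$), and then $[\thu(\Pc_\la)]\subset\cu$ comes from Theorem~\ref{t:extend} as you say. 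Your reverse inclusion $\Tc_\la\subset\thu(\Pc_\la)$ via fullness of $\thu(\Pc_\la)$ is fine.
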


In \cite{bopt14a}, we study properties of components of
$\thu(\Pc_\la)\sm \Pc_\la$, where $|\la|\le 1$.
Note that these are the same as bounded components of $\Fc_\la\sm\Pc_\la$.
Let $\Ac$ be the set of all maps $f_{\la, b}$ with $|\la|<1$.
For each such map $f$, let $A(f)$ be the basin
of immediate attraction of $0$. In 
Section \ref{s:notinphd}, we show that if $f\in \Fc_\la\sm \Pc_\la$,
then $f$ has two distinct critical points. A critical point $c$ of
such an $f$ is said to be \emph{principal} if there is a
neighborhood $\Uc$ of $f$ in $\Fc$ and a holomorphic function
$\omega_1:\Uc\to\C$ with $c=\omega_1(f)$, and such that, for every
$g\in\Uc\cap\Ac$, the point $\omega_1(g)$ is a unique critical point
of $g$ contained in $A(g)$ (the uniqueness of $c$ follows from the
fact that $g\notin \ol{\phd_3}$). By Section \ref{s:notinphd}, the
point $\om_1(f)$ is well-defined; denote the other critical point of
$f$ by $\om_2(f)$.

\begin{dfn}[\cite{bopt14a}]\label{d:sieg-cap}
Let $\Uc$ be a component of $\thu(\Pc_\la)\sm\Pc_\la$. If, for every
$f\in\Uc$, the point $0$ belongs to a Siegel domain $U$ of $f$, and
there exists an eventual pullback $V$ of $U$ such that $f|_V$ is
two-to-one, we call $\Uc$ a component of \emph{Siegel capture} type.
If, for every $f\in\Uc$, the map $f$ has connected Julia set $J(f)$ of
positive Lebesgue measure, carries a measurable invariant line field
and is such that $\om_2(f)\in J(f)$, then $\Uc$ is said to be of
\emph{queer} type.
\end{dfn}

Theorem~\ref{t:phd-info} relies upon the tools developed in
\cite{bopt14}.

\begin{thm}[\cite{bopt14a}]\label{t:phd-info}
Suppose that $|\la|\le 1$. Then any component of
$\thu(\Pc_\la)\sm\Pc_\la$ is either of Siegel capture type or of
queer type.
\end{thm}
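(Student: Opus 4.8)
The plan is to analyze a component $\Uc$ of $\thu(\Pc_\la)\sm\Pc_\la$ by first extracting, via Lemma~\ref{l:extend-th-i}, that every $f\in\Uc$ is stable and (by the discussion preceding Definition~\ref{d:sieg-cap}, i.e. the results of Section~\ref{s:notinphd}) has two distinct critical points $\om_1(f)\in A(f)$ and $\om_2(f)$. Since $[f]\in\cu$ by Lemma~\ref{l:extend-th-i}, $f$ is cubioidal: it has a non-repelling fixed point, no repelling periodic cutpoints in $J(f)$, and all non-repelling periodic points except at most one fixed point have multiplier $1$. The first step is to determine where the principal critical point $\om_1(f)$ can lie. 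Because $|\la|<1$ on the interior of the slice (the boundary case $|\la|=1$ is handled by continuity/limiting arguments), $0$ is an attracting or parabolic fixed point; the hypothesis that $\om_1(f)\in A(f)$ for $f\in\Ac$, together with stability (holomorphic motion of $J(f)$, hence of the Fatou components) over $\Uc$, forces $A(f)$ — or more precisely the Fatou component $U$ of $0$ — to persist as either an attracting basin, a parabolic basin, or a Siegel disk throughout $\Uc$, and the type is constant on the connected set $\Uc$ by stability.

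The second step is the dichotomy itself. If $0$ lies in a Siegel domain $U$ for some (hence every) $f\in\Uc$, I would show $\Uc$ is of Siegel capture type: since $f$ is cubic with two critical points and $\om_1$ must accumulate on $\partial U$ (Siegel disks have critical points on the boundary of their grand orbit), while $0$ itself carries no critical point, the second critical point $\om_2$ must be responsible for a degree-two pullback; more precisely one argues that some eventual pullback $V$ of $U$ contains $\om_2(f)$, so $f|_V$ is two-to-one, giving the Siegel capture structure. If instead $0$ is in an attracting or parabolic basin, then stability rules out the attracting/parabolic case having $[f]\notin\ol\phd_3$ unless $\om_2(f)$ is "trapped": if $\om_2(f)$ lay in a Fatou component it would either converge to the same fixed point (putting $[f]\in\phd_3$, contradicting $\Uc\cap\Pc_\la=\emptyset$ — here one uses that $\om_2$ cannot converge to a non-fixed attracting cycle, since then that cycle's multiplier obstruction and the no-repelling-cutpoints condition of $\cu$ would be violated, or one invokes Theorem~\ref{t:phd-info}'s setup from \cite{bopt14}), or it would land in another Siegel/parabolic component again forcing a capture. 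Eliminating all these, $\om_2(f)\in J(f)$; then $f$ being stable with $\om_2$ in the Julia set and no attracting/parabolic dynamics available to absorb it forces, by the No Invariant Line Fields conjecture's contrapositive in the stable setting (Mañé–Sad–Sullivan: a stable map with a critical point in $J$ either has $J$ of measure zero with no line field, or carries an invariant line field on a positive-measure $J$) — and measure-zero $J$ with $\om_2\in J$ would again contradict stability of the whole component unless... — that $J(f)$ has positive measure and carries a measurable invariant line field, i.e. $\Uc$ is queer.

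The main obstacle, I expect, is the queer case: rigorously excluding the possibility that a stable component has $\om_2(f)\in J(f)$ with $J(f)$ of Lebesgue measure zero. In the quadratic setting this is exactly the content of density of hyperbolicity being open (the complement of such behavior), and in the cubic slice one must argue that a stable component all of whose maps have a critical point in a measure-zero Julia set cannot be a full component of $\thu(\Pc_\la)\sm\Pc_\la$ — one needs the holomorphic motion to be quasiconformal and then a Sullivan-style rigidity / $\lambda$-lemma argument showing that such a component would have to be a point or would deform into $\Pc_\la$. Carefully setting up the holomorphic motion on $\Uc$, verifying it is equivariant and extends to a motion of $\C$ by the Słodkowski extension, and then running the measurable-dynamics trichotomy is the technical heart; I would lean on the corresponding arguments in \cite{bopt14} (the tools Theorem~\ref{t:phd-info} is said to rely upon) and on the classical Mañé–Sad–Sullivan and McMullen line-field machinery to close this gap.
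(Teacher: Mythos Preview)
The paper does not prove Theorem~\ref{t:phd-info}; it is quoted from \cite{bopt14a} and used as an input (see the sentence ``Theorem~\ref{t:phd-info} relies upon the tools developed in \cite{bopt14}'' immediately preceding it, and its later use in the proofs of Lemma~\ref{l:no-ql-capt} and Lemma~\ref{l:ql-extras}). There is therefore no proof in this paper to compare your attempt against.

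That said, a few remarks on your sketch. First, you conflate the slice parameter with a varying quantity: $\la$ is \emph{fixed} with $|\la|\le 1$, so the case split is not ``$|\la|<1$ on the interior versus $|\la|=1$ on the boundary'' but rather a single fixed slice, and the type of the fixed point $0$ (attracting, parabolic, Siegel, Cremer) is determined by $\la$ alone, not by $f\in\Uc$. Second, your Siegel argument is imprecise: the claim that ``some eventual pullback $V$ of $U$ contains $\om_2(f)$'' is exactly what needs proving, and ruling out that $\om_2(f)$ lies in $J(f)$ in the Siegel case (which would place the component in the queer bucket instead) requires work you do not supply. Third, and most seriously, in the queer branch you correctly identify the crux---showing that a stable component with $\om_2(f)\in J(f)$ forces positive measure and an invariant line field---but your mechanism (``the No Invariant Line Fields conjecture's contrapositive'') is not a theorem; the actual argument in \cite{bopt14a} runs the Ma\~n\'e--Sad--Sullivan/McMullen line: the holomorphic motion over $\Uc$ gives a Beltrami differential supported on $J(f)$ that is $f$-invariant, and nontriviality of the motion forces this differential to be nonzero on a set of positive measure. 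Your outline gestures at this but does not execute it, and the step ``measure-zero $J$ with $\om_2\in J$ would again contradict stability'' is not obviously true without that machinery.
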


We do not know whether components of Siegel capture type or of queer type exist.

\section{Main results}\label{s:main}

Let us begin by making a few remarks. If we perturb a cubic
polynomial $f$ with a non-repelling fixed point to a polynomial $g$
with an attracting fixed point, then $g$ restricted to the basin of
attraction $A(g)$ of that point is either two-to-one or
three-to-one. Here, we study polynomials $f$ with a non-repelling
fixed point such that $[f]\not\in\ol\phd_3$, i.e., \emph{for all
cubic polynomials $g$ sufficiently close to $f$, if $g$ has a fixed
attracting point, then $g$ is two-to-one on its basin of immediate
attraction}. While interesting by itself, this together with
\cite{bopt14} also allows us to learn more about the structure of
$\ol\phd_3$.

Next we need a few classic definitions and a major result due to
Douady and Hubbard \cite{DH85b}.

\begin{dfn}[\cite{DH85b}]\label{d:pl}
A \emph{polynomial-like map (of degree $k$)} is a holomorphic map
$f: U \to V$ where every point in $V$ has exactly $d$ preimages in
$U$ (counted with multiplicities), $U, V$ are open sets isomorphic
to the unit disc, and $\ol{U}\subset V$. A polynomial-like map of
degree $2$ is called a \emph{quadratic-like} map. The \emph{filled
Julia set} $K(f)$ of $f$ is the set of points in $U$ that never
leave $U$ under iteration. The \emph{Julia set} $J(f)$ of $f$ is the
boundary of $K(f)$. Two quadratic-like maps $f:U\to V$ and $g:U'\to
V'$ are \emph{hybrid equivalent} if there is a quasi-conformal map
$\vp:U\to U'$ conjugating $f$ to $g$ such that $\vp$ is conformal
almost everywhere on $K(f)$. The map $\vp$ is called a
\emph{straightening map}.
\end{dfn}

It is easy to see that under hybrid equivalence repelling periodic
points cannot correspond to non-repelling periodic points.

The following major result is due to Douady and Hubbard; we state it
only in the quadratic case.

\begin{thm}[\cite{DH85b}]\label{t:straigth}
Let $f : U \to V$ be a quadratic-like map. Then $f$ is hybrid
equivalent to a quadratic polynomial $P$. Moreover, if $K(f)$ is
connected, then $P$ is unique up to (global) conjugation by an
affine map.
\end{thm}

Say that a cubic polynomial $f\in\Fc$ is \emph{immediately
renormalizable} if there are Jordan domains $U^*\ni 0$ and $V^*$
such that $f^*=f:U^*\to V^*$ is a quadratic-like map (we will use
the notation $f^*$ at several occasions in the future when we talk
about immediately renormalizable maps). If $f\in \Fc_{nr}$ is
immediately renormalizable, then the quadratic-like Julia set
$J(f^*)=J^*$ is connected. Indeed, $f^*$ is hybrid equivalent to a
quadratic polynomial $g^*$. Since $0\in J(f^*)$ is a non-repelling
$f$-fixed point, it corresponds to a non-repelling fixed point of
$g^*$.
Hence, $J(g^*)$ and $J(f^*)$ are connected, and $g^*=z^2+c$ with
$c\in\ol\phd_2$.

Note that, if $[f]\in\ol\phd_3$, then
$f$ is not immediately renormalizable. Indeed, 
if $f:U^*\to V^*$ is quadratic-like, then $g:U^*\to V^*$ is
quadratic-like for all $g$ sufficiently close to $f$. If $f, g\in
\Fc_{nr}$ and $0\in U^*$, then the quadratic-like Julia sets of both
$f$ and $g$ are connected. Thus, $[g]$ cannot belong to $\phd_3$.
If $f\in\Fc_{nr}$ but $[f]\not\in\ol\phd_3$, then $f$ is said to be
\emph{potentially renormalizable}. Clearly, the set of all
potentially renormalizable polynomials is open in $\Fc_{nr}$. A
connected component of the set of potentially renormalizable
polynomials in $\Fc_\lambda$ is called a \emph{potentially
renormalizable component}. For any $f\in \Fc_\la\sm\Pc_\la$, let
$\Wc_f$ be a potentially renormalizable component containing $f$; the set
$\Wc_f$ is open in $\Fc_\la$, and the set of all potentially renormalizable
polynomials is open in $\Fc$.

We will now discuss the main results of this paper. In
Section~\ref{s:notinphd} we introduce special sets $Z(f)$ and $X(f)$
and study them using \emph{holomorphic motion}. We define a
\emph{countable set $Z(f)$ of iterated preimages of the principal
critical point $\omega_1(f)$} as follows: a point $z\in\C$ belongs
to $Z(f)$ if there exists an open convex neighborhood $\Uc_z$ of $f$
in $\Fc$ and a holomorphic function $\zeta:\Uc_z\to\C$ so that (1)
$\zeta(f)=z$, (2) $g^{\circ n}(\zeta(g))=\omega_1(g)$ for all
$g\in\Uc_z$ and for some $n\ge 0$ independent of $g$, and (3)
$\zeta(g)\in A(g)$ for all $g\in\Uc_z\cap\Ac$. Then we study
properties of $Z(f)$, define sets $Z_n(f)$ as the subsets of $Z(f)$
consisting of all preimages of $\omega_1(f)$ mapped to $\omega_1(f)$
in $n$ steps, and define the set $X(f)$ as the limit of the sets
$Z_n(f)$, i.e., $X(f)=\bigcap_{m\ge 0}\ol{\bigcup_{n\ge m}Z_n(f)} $.
The main result of Section~\ref{s:notinphd} is Theorem A.

\begin{thmA}
Suppose that $f\in \Wc_f$, where $\Wc_f$ is a potentially
renormalizable component of $\Fc_\la\sm \Pc_\la$. Then there is an
equivariant holomorphic motion $\ol\mu:X(f)\times\Wc_f\to\C$. The
set $X(f)$ is a forward invariant subset of $J(f)$. It contains no
neutral periodic points different from $0$. Every point of $X(f)$
has at least two preimages in $X(f)$ counting multiplicities.
\end{thmA}

In Section~\ref{s:poly-like} we study polynomial-like maps. The main
result of Section~\ref{s:poly-like} is Theorem B.

\begin{thmB}
  Let $P:\C\to\C$ be a polynomial, and $Y\subset\C$ be a non-se\-pa\-ra\-ting
  $P$-invariant continuum.
  The following assertions are equivalent:
\begin{enumerate}
 \item the set $Y$ is the filled Julia set of some polynomial-like
     map $P:U^*\to V^*$ of degree $k$,
 \item $Y$ is a component of the set $P^{-1}(P(Y))$ and, for every
     attracting or parabolic point $y$ of $P$ in $Y$, the
     attracting basin of $y$ or the union of all parabolic domains
     at $y$ is a subset of $Y$.
\end{enumerate}
\end{thmB}

The proof of Theorem B uses some ideas communicated by M. Lyubich to
the fourth named author.

Finally, in Section~\ref{s:exteclosu} we prove Theorem C, in which we
show that potentially renormalizable polynomials are immediately
renormalizable depending on transparent assumptions about their
dynamics.

\begin{thmC}
Let $|\la|\le 1$ and $f\in \Fc_\la\sm \Pc_\la$. Then one of the
following holds:
\begin{enumerate}

\item $f$ belongs to the unbounded component of $\Fc_\la\sm \Pc_\la$
and $f$ is immediately renormalizable;

\item $f$ belongs to a bounded component of
    $\Fc_\la\sm \Pc_\la$ of Siegel capture type and $f$ is not
    immediately renormalizable;

\item $f$ belongs to a bounded component of
    $\Fc_\la\sm \Pc_\la$ of queer type, and there are two
    possibilities: \emph{(a)} the entire orbit of $\om_2(f)$ is
    disjoint from $X(f)$ and $f$ is immediately renormalizable, or
    \emph{(b)} $\om_2(f)\in X(f)$ and $f$ is not immediately
    renormalizable.
\end{enumerate}

If $f$ is immediately renormalizable, the corresponding
quadratic-like map is hybrid equivalent to a quadratic polynomial
$z^2+c$ with $c\in\ol\phd_2$ or, equivalently, to the polynomial
$\lambda z + z^2$.
\end{thmC}

We do not know whether any of the options mentioned in parts (2) and
(3) realizes. The authors are indebted to the referee, whose
suggestions led to the proof of part (3) of Theorem C.

There is no loss of generality in that we consider only
perturbations of $f$ in $\Fc$: instead, we could consider small
perturbations $g$ of $f$ such that, arbitrarily close to $0$, the
map $g$ has an attracting fixed point. Theorem A generalizes some
results from \cite{BH,Z}.

\medskip

{\footnotesize \emph{Notation and Preliminaries:} we write $\ol A$
for the closure of a subset $A$ of a topological space and $\bd(A)$
for the boundary of $A$; the $n$-th iterate of a map $f$ is denoted
by $f^{\circ n}$. Let $\C$ stand for the complex plane, $\C^*$ for
the Riemann sphere, $\disk$ for the open unit disk consisting of all
complex numbers $z$ with $|z|<1$, and $\uc=\bd(\disk)$ for the unit
circle, which is identified with $\R/\Z$.
The $d$-tupling map of the unit circle is denoted by $\si_d$. 
We assume knowledge of basic notions from complex dynamics, such as
\emph{Green function, dynamic rays $($of specific argument$)$,
B\"ottcher coordinate, Fatou domain, repelling, attracting, neutral
periodic points, parabolic, Siegel, Cremer periodic points} etc
(see, e.g., \cite{Mc}). }

\section{Potentially renormalizable polynomials}
\label{s:notinphd}

Throughout Section~\ref{s:notinphd}, we consider a potentially
renormalizable cubic polynomial $f$. We want to see when $f$ is
immediately renormalizable. Below we outline our strategy; to
motivate our approach, assume for the moment that $f$ \emph{is
already} immediately renormalizable. The main idea is to observe
that some points and sets related to the quadratic-like restriction
$f^*$ (including $J^*=J(f^*)$) can actually be defined independently
of the fact that $f$ is immediately renormalizable. This allows us
to define them for \emph{all} potentially renormalizable maps $f$;
in other words, we define a \emph{potential quadratic-like Julia
set} and then prove that in some cases the potential of being a
quadratic-like Julia set is realized.

Recall that $\Ac$ is the set of all cubic polynomials $g\in\Fc$ with
$|g'(0)|<1$, and, for $g\in \Ac$, we write $A(g)$ for the immediate
basin of attraction of $0$ with respect to $g$. If $g\in \Ac$ is
such that there is a unique critical point in $A(g)$, we let
$\om_1(g)$ be this critical point and let $\om_2(g)$ be the other
critical point of $g$. As a tool, we consistently approximate
potentially renormalizable maps $f$ by polynomials from $\Ac$. If a
potentially renormalizable map $f$ belongs to $\Ac$, then $f$ itself
serves as its own approximation. Since $f$ is potentially
renormalizable, there is a neighborhood of $f$ in $\Fc$, in which
there is no polynomial $g\in\Ac$ with $[g]\in\phd_3$.

First we define the critical points $\om_1(f)$ and $\om_2(f)$ for
all potentially renormalizable maps, and show that these points
depend holomorphically on $f\in\Fc_\la\sm\Pc_\la$. We next consider
a countable set $Z(f)$ of iterated $f$-preimages of $\om_1(f)$, each
of which depends holomorphically on $f\in\Fc_\la\sm\Pc_\la$. For
$g\in\Ac$ with a unique critical point $\om_1(g)\in A(g)$, the set
$Z(g)$ is the set of all iterated $g$-preimages of $\om_1(g)$
contained in $A(g)$. Finally, the potential quadratic-like Julia set
of $f$ can be defined as the set of all non-isolated points in $\ol
Z(f)$. We will show that this set moves holomorphically with $f$.

\subsection{The principal critical point of $f$}
\label{ss:princ}

Fix $f\in\Fc_\la\sm\Pc_\la$ as above.

\begin{lem}
  \label{l:2crpts}
The polynomial $f$ has two distinct critical points.
\end{lem}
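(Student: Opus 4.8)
The plan is to argue by contradiction: assume $f$ has a single (necessarily double) critical point and deduce that $[f]\in\ol\phd_3$, contradicting $f\notin\Pc_\la$. Writing $f=f_{\la,b}$, so that $f'(z)=3z^2+2bz+\la$, the two critical points of $f$ coincide precisely when the discriminant $4b^2-12\la$ of $f'$ vanishes, i.e. when $b^2=3\la$, in which case the double critical point equals $c=-b/3$. Thus everything reduces to the claim: if $|\la|\le 1$ and $b^2=3\la$, then $[f_{\la,b}]\in\ol\phd_3$.

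First I would dispose of the case $|\la|<1$. Then $0$ is an attracting fixed point of $f$, so by Fatou's classical theorem its immediate basin $A(f)$ contains a critical point of $f$; since $c$ is the only critical point, $c\in A(f)$. Hence all critical points of $f$ lie in the immediate attracting basin of the fixed point $0$, and by the characterization of $\phd_3$ recalled in the introduction this gives $[f]\in\phd_3\subset\ol\phd_3$, so $f\in\Pc_\la$ — a contradiction. (The degenerate subcase $b=0$, $\la=0$, $f=z^3$ is included here, with $c=0$ coinciding with the super-attracting fixed point.)

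It remains to treat the boundary case $|\la|=1$, where I would use an approximation argument together with the closedness of $\ol\phd_3$. Pick $\la_n\to\la$ with $|\la_n|<1$, say $\la_n=(1-1/n)\la$, and choose square roots $b_n$ of $3\la_n$ with $b_n\to b$ (possible since $b^2=3\la$). Then $f_{\la_n,b_n}\to f_{\la,b}=f$ in $\Fc$, hence $[f_{\la_n,b_n}]\to[f]$ in the space of classes of cubic polynomials. Each $f_{\la_n,b_n}$ satisfies $b_n^2=3\la_n$ with $|\la_n|<1$, so by the previous step $[f_{\la_n,b_n}]\in\phd_3$; passing to the limit, $[f]\in\ol\phd_3$, again contradicting $f\in\Fc_\la\sm\Pc_\la$. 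Therefore $b^2\neq 3\la$ and $f$ has two distinct critical points. I do not expect a serious obstacle: the real content is the elementary remark that a cubic with a unique critical point and an attracting fixed point automatically lies in $\phd_3$, and the only point needing a little care is the boundary case $|\la|=1$, handled by perturbing $\la$ into the open disk while preserving the relation $b^2=3\la$ and invoking that $\ol\phd_3$ is closed.
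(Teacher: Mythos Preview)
Your proof is correct and follows essentially the same strategy as the paper: assume a double critical point, move within the locus of such polynomials to one with $|g'(0)|<1$, observe that such $g$ has $[g]\in\phd_3$, and conclude $[f]\in\ol\phd_3$. The only difference is presentational: the paper phrases the perturbation abstractly (the curve $\Cc\subset\Fc$ of polynomials with a multiple critical point, on which $g\mapsto g'(0)$ is non-constant holomorphic), whereas you compute the curve explicitly as $b^2=3\la$ and write down the approximating sequence by hand, splitting off the case $|\la|<1$ where no perturbation is needed.
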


\begin{proof}
Assume that $\omega(f)$ is the only critical point of $f$ (then it
has multiplicity two). Let $\Cc$ be the space of all polynomials
$g\in \Fc$ with a multiple critical point $\omega(g)$. This is an
algebraic curve in $\Fc$ passing through $f$ and given by the
formula $g_a(z)=\frac{a^2}3z+az^2+z^3, a\in \C$. This implies that
there are polynomials $g\in\Cc$ arbitrarily close to $f$, for which
$|g'(0)|<1$. The class of any such polynomial $g$ belongs to
$\phd_3$ as the immediate basin of $0$ with respect to $g$ must
contain the multiple critical point $\omega(g)$, contradicting our
assumption on $f$.
\end{proof}

By Lemma~\ref{l:2crpts}, there are two critical points of $f$.
Recall, that a critical point $c$ of $f$ is said to be
\emph{principal} if there is a neighborhood $\Uc$ of $f$ in $\Fc$
and a holomorphic function $\omega_1:\Uc\to\C$ defined on this
neighborhood such that $c=\omega_1(f)$, and, for every
$g\in\Uc\cap\Ac$, the point $\omega_1(g)$ is the critical point of
$g$ contained in $A(g)$.

\begin{thm}
  \label{t:princ}
There exists a unique principal critical point of $f$.
\end{thm}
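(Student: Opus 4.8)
The plan is to prove existence and uniqueness separately. For existence, I would argue by approximation: start with polynomials $g\in\Ac$ near $f$ and track the critical point of $g$ that lies in $A(g)$. For $g\in\Ac$, the immediate basin $A(g)$ contains at least one critical point; since $[g]\notin\phd_3$ for $g$ near $f$ (by the standing assumption that $f$ is potentially renormalizable), $A(g)$ cannot contain \emph{both} critical points, so it contains exactly one. Call it $\om_1(g)$. The two critical points of $g$ are the roots of $g'(z)=3z^2+2bz+\la=0$, which depend holomorphically on the parameter $b$ away from the discriminant locus, and by Lemma~\ref{l:2crpts} (applied in a neighborhood, shrinking if necessary so that all nearby maps also have distinct critical points — this uses that having a double critical point is a codimension-one condition and $f$ avoids it) the two critical points vary holomorphically near $f$. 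The issue is whether the \emph{labeling} $g\mapsto\om_1(g)$ is consistent, i.e., locally constant in the choice of branch. Here I would use that $A(g)$ moves continuously (indeed the attracting fixed point $0$ is persistent and its immediate basin varies lower-semicontinuously), so the critical point trapped inside $A(g)$ cannot jump from one branch to the other as $g$ varies within a small connected neighborhood intersected with $\Ac$. Then $\om_1$ extends to a holomorphic function on a full neighborhood $\Uc$ of $f$ in $\Fc$ (not just on $\Uc\cap\Ac$) because the two holomorphic branches of the critical points are globally defined on the simply connected neighborhood $\Uc$ once we stay off the discriminant, and we simply pick the branch that equals $\om_1(g)$ on $\Uc\cap\Ac$. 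Define $\om_1(f)$ to be the value of this branch at $f$; it is a principal critical point by construction.

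For uniqueness, suppose both critical points of $f$ were principal, via holomorphic selectors $\om_1$ and $\om_1'$ on neighborhoods $\Uc$, $\Uc'$ with $\om_1(f)\neq\om_1'(f)$. On $\Uc\cap\Uc'\cap\Ac$ (which is nonempty and accumulates at $f$, since we can perturb $f$ into $\Ac$), both $\om_1(g)$ and $\om_1'(g)$ would be critical points of $g$ lying in $A(g)$. If they were distinct for such $g$, then $A(g)$ would contain both critical points of $g$, forcing $[g]\in\phd_3$, contradicting potential renormalizability. Hence $\om_1(g)=\om_1'(g)$ on a set accumulating at $f$; by the identity principle for holomorphic functions, $\om_1\equiv\om_1'$ on the connected component containing $f$, so in particular $\om_1(f)=\om_1'(f)$, a contradiction.

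The main obstacle is the continuity/consistency argument for the labeling $g\mapsto\om_1(g)$ on $\Uc\cap\Ac$: one must be sure that the critical point sitting in the immediate basin does not get "exchanged" under small perturbation. The clean way to handle this is to note that for $g\in\Ac$ with $[g]\notin\phd_3$, exactly one critical point is in $A(g)$ and the other is not, and membership in the open set $A(g)$ is an open condition that persists under perturbation (the critical points vary continuously and $A(g)$ contains a fixed neighborhood of $0$ uniformly in $g$ near $f$); combined with the fact that $\Fc_\la\sm\Pc_\la$ being the relevant ambient and the branches being holomorphic, the selector is forced to be a single holomorphic branch. A secondary technical point is ensuring $\Uc$ can be taken so that $\Uc\cap\Ac$ is connected, or at least that $f$ lies in the closure of a connected piece of it on which the argument runs; this follows because $\Ac$ is open and $\{g'(0)=\la, |\la|<1\}$-slices near $f$ give a product-like neighborhood structure.
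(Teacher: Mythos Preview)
Your proposal is correct and follows essentially the same approach as the paper: both arguments rest on (i) the two holomorphic branches of critical points near $f$, (ii) openness of the sets $\Oc_i=\{g\in\Uc\cap\Ac:\omega_i(g)\in A(g)\}$, and (iii) connectedness of $\Uc\cap\Ac$, together with the observation that any $g$ with both critical points in $A(g)$ has $[g]\in\phd_3$. The paper packages existence as a proof by contradiction (if neither branch were principal, both $\Oc_1$ and $\Oc_2$ would be nonempty open sets covering the connected set $\Uc\cap\Ac$, hence would overlap), and gets connectedness simply by taking $\Uc$ convex and noting $\Ac$ is convex---cleaner than your product-structure remark. Your uniqueness argument is fine but the identity-principle detour is unnecessary: since $\omega_1(f)\ne\omega_1'(f)$, continuity already gives $\omega_1(g)\ne\omega_1'(g)$ for $g\in\Ac$ near $f$, so both critical points of $g$ lie in $A(g)$ and the contradiction with $[g]\notin\phd_3$ is immediate.
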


\begin{proof}
By Lemma \ref{l:2crpts}, the two critical points of $f$ are
different. Then there are two holomorphic functions, $\omega_1$ and
$\omega_2$, defined on a convex neighborhood $\Uc$ of $f$ in $\Fc$,
such that $\omega_1(g)$ and $\omega_2(g)$ are the critical points of
$g$ for all $g\in\Uc$. Suppose that neither $\omega_1(f)$, nor
$\omega_2(f)$ is principal. Then, arbitrarily close to $f$, there
are cubic polynomials $g_1\in \Ac\cap \Uc$ and $g_2\in\Ac\cap \Uc$
with $\omega_2(g_1)\not\in A(g_1)$ and $\omega_1(g_2)\not\in
A(g_2)$. Since $A(g_i)$ contains a critical point for $i=1,2$, we
must have that $\omega_i(g_i)\in A(g_i)$.

The set $\Ac$ is convex. Therefore, the intersection $\Uc\cap\Ac$ is
also convex, hence connected. Let $\Oc_i$, $i=1$, $2$, be the subset
of $\Uc\cap\Ac$ consisting of all polynomials $g$ with
$\omega_i(g)\in A(g)$. By the preceding paragraph, $g_1\in \Oc_1$
and $g_2\in \Oc_2$. We claim that $\Oc_i$ is open. Indeed, if
$g\in\Oc_i$, then there exists a Jordan disk $U\subset A(g)$ with
$g(U)$ compactly contained in $U$, and $\omega_i(g)\in U$. If
$\tilde g\in\Uc\cap\Ac$ is sufficiently close to $g$, then $\tilde
g(U)$ is still compactly contained in $U$, and $\omega_i(\tilde g)$
is still in $U$, by continuity. It follows that $U\subset A(\tilde
g)$, in particular, $\omega_i(\tilde g)\in A(\tilde g)$. Thus,
$\Oc_i$ is open. Since $\Oc_1, \Oc_2$ are open and non-empty, the
set $\Uc\cap\Ac$ is connected, and
$$
\Uc\cap\Ac=\Oc_1\cup\Oc_2,
$$
the intersection $\Oc_1\cap\Oc_2$ is nonempty. Note that
$\Oc_1\cap\Oc_2$ consists of polynomials, whose classes are in
$\phd_3$. Since $\Uc$ can be chosen arbitrarily small, it follows
that $f$ can be approximated by maps $g\in\Ac$ with $[g]\in\phd_3$, a contradiction.

The existence of a principal critical point of $f$ is thus proved.
The uniqueness follows immediately from the fact that $f\notin
\ol{\phd_3}$.
\end{proof}

Denote by $\omega_1(f)$ the principal critical point of $f$ and by
$\om_2(f)$ the other critical point of $f$. For $g\in \Fc_{nr}$
sufficiently close to $f$, the point $\omega_1(g)$ is a holomorphic
function of $g$.

\begin{lem}\label{l:om12}
Let $m$ and $n$ be two non-negative integers. Then for each slice $\la,
|\la|\le 1$ the function $\varphi_{m,
n}(f)=f^m(\om_1(f))-f^n(\om_2(f))$ is not a constant on the slice
$\Fc_\la$.
\end{lem}

\begin{proof}
Suppose otherwise and let $\varphi_{m, n}(f)=s$. Observe that there
exists $M$ such that for each $g\in \Fc_{nr}$ with $|g'(0)|<1$ the
set $A(g)$ is contained in the disk $D$ of radius $M$ centered at
$0$. Since arbitrarily close to any $f\in \Fc_{nr}$ there are maps
$g\in \Fc_{nr}$ with $|g'(0)|<1$, it follows that the $f$-orbit of
$\om_1(f)$ is contained in $D$ for any $f\in \Fc_\la$. Since by the
assumption $\varphi_{m, n}(f)=s$ it follows that the $f$-orbit of
$\om_2(f)$ is contained in the disk $\widehat D$ of radius $M+|s|$
centered at $0$. Thus, in this case on the entire $\Fc_\la$ both
critical points are non-escaping, a contradiction.
\end{proof}

\subsection{Holomorphic motion}
\label{ss:holmot} Let $\Lambda$ be a Riemann surface, and
$Z\subset\C^*$ any (!) subset. A {\em holomorphic motion} of the set
$Z$ is a map $\mu:Z\times \Lambda\to\C^*$ with the following
properties:
\begin{itemize}
  \item for every $z\in Z$, the map $\mu(z,\cdot):\{z\}\times \Lambda\to\C^*$ is
  holomorphic;
  \item for $z\ne z'$ and every $\nu\in\Lambda$, we have $\mu(z,\nu)\ne \mu(z',\nu)$;
  \item there is a point $\nu_0$ such that $\mu(z,\nu_0)=z$ for all $z\in Z$.
\end{itemize}
We will use the following crucial \emph{$\lambda$-lemma} of Ma\~n\'e, Sad and
Sullivan \cite{MSS}: {\em a holomorphic motion of a set $Z$ extends
to a unique holomorphic motion of the closure $\ol Z$; moreover,
this extension is a continuous function in two variables such that,
for every $\nu\in \Lambda$, the map $\vp: \ol{Z}\to\C^*$ defined
as $\vp(z)=\mu(z, \nu)$ is quasi-symmetric}. There have
been useful generalizations of this result, 
but we will only need the original version. It is worth mentioning
here that the simplest version of the $\lambda$-lemma (extension to
the closure) appeared also in \cite{lyu83}.

We will now define a \emph{countable set $Z(f)$ of iterated
preimages of the principal critical point $\omega_1(f)$}. By
definition, a point $z\in\C$ belongs to $Z(f)$ if there exists an
open convex neighborhood $\Uc_z$ of $f$ in $\Fc$ and a holomorphic
function $\zeta:\Uc_z\to\C$ with the following properties:
\begin{itemize}
\item $\zeta(f)=z$;
\item we have $g^{\circ n}(\zeta(g))=\omega_1(g)$ for all $g\in\Uc_z$
and for some $n\ge 0$ independent of $g$;
\item we have $\zeta(g)\in A(g)$ for all $g\in\Uc_z\cap\Ac$.
\end{itemize}
A holomorphic function $\zeta:\Uc_z\to\C$ like above is called a
\emph{deformation} of $z\in Z(f)$. As it is always clear what kind
of deformation we consider, in what follows we will suppress the
subscript in the notation for $\Uc$.

For any $f\in \Fc_\la\sm\Pc_\la$, let $\Wc_f$ be a potentially renormalizable
component containing $f$; clearly, $\Wc_f$ is open.

\begin{lem}
  \label{l:no-crval}
Let $f$ be as above.
\begin{enumerate}
 \item The critical point $\om_1(f)$ is not eventually mapped to $\om_2(f)$.
 \item The set $Z(f)$ contains no critical values of $f$.
\end{enumerate}
\end{lem}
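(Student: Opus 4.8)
The plan is to treat both statements by the same mechanism: a holomorphic motion argument driven by the non-existence of nearby maps whose classes lie in $\phd_3$. First I would set up the approximation: since $f\in\Fc_\la\sm\Pc_\la$, there is a neighborhood of $f$ in $\Fc$ containing no $g\in\Ac$ with $[g]\in\phd_3$, and $f$ can be approximated by maps $g\in\Ac$. For such $g$, the dynamics on $A(g)$ is understood—$g|_{A(g)}$ is two-to-one (it cannot be three-to-one, else $[g]\in\phd_3$), so $A(g)$ contains exactly one critical point, namely $\om_1(g)$, while $\om_2(g)\notin A(g)$; equivalently $A(g)$ is a quadratic-like Fatou domain and $\om_1(g)$ is never mapped to $\om_2(g)$ under iteration, because $\om_2(g)$ lies outside $A(g)$ and $A(g)$ is forward invariant.

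For part (1), I would argue by contradiction. Suppose $f^{\circ n}(\om_1(f))=\om_2(f)$ for some $n\ge 1$. Both $\om_1$ and $\om_2$ are holomorphic functions of $g$ near $f$ (by Theorem~\ref{t:princ} and Lemma~\ref{l:2crpts}, the two critical points are distinct and depend holomorphically on $g$), so the equation $g^{\circ n}(\om_1(g))=\om_2(g)$ defines an analytic subvariety $\Cc$ of a neighborhood $\Uc$ of $f$; it either contains an open set or is a proper subvariety through $f$. If it is proper, one can find $g\in\Ac$ arbitrarily close to $f$ with $g\notin\Cc$—but that is not yet a contradiction, so instead I would show $\Cc$ must in fact contain a full neighborhood: for $g\in\Ac\cap\Uc$ we always have $\om_1(g)\in A(g)$ and $\om_2(g)\notin A(g)$, hence no iterate of $\om_1(g)$ equals $\om_2(g)$, so $g\notin\Cc$ for all $g\in\Ac\cap\Uc$. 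Since $\Ac\cap\Uc$ is open and nonempty (it accumulates on $f$) and $\Cc$ is analytic, $\Cc$ cannot contain an open set while avoiding the open set $\Ac\cap\Uc$ unless $\Cc$ is proper; but then $f\in\Cc$ while the nearby approximating maps are not, which is consistent only if... the cleaner route is: the identity $g^{\circ n}(\om_1(g))=\om_2(g)$ either holds on all of $\Uc$ or on a proper subvariety. It fails on the open set $\Ac\cap\Uc$, so it does not hold on all of $\Uc$; so it holds on a proper analytic subvariety. That does not immediately contradict $f$ being in it. The genuine contradiction comes from perturbing within the subvariety: one shows that the curve $\Cc$ (an algebraic curve in $\Fc$, as in the proof of Lemma~\ref{l:2crpts}) has $g\mapsto g'(0)$ nonconstant, so one can move along $\Cc$ to reach $g\in\Cc\cap\Ac$; for such $g$ the relation $g^{\circ n}(\om_1(g))=\om_2(g)$ holds, forcing $\om_2(g)$ into the forward orbit of $\om_1(g)\subset A(g)$, whence $\om_2(g)\in A(g)$ and $[g]\in\phd_3$—contradicting the choice of neighborhood. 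This is the step I expect to be the main obstacle: one must verify that $g\mapsto g'(0)$ is nonconstant on the curve $\Cc$, which requires checking that $\Cc$ is not contained in a single $\la$-slice; this is where a short computation with $f_{\la,b}(z)=\la z+bz^2+z^3$ is needed, analogous to the argument already used in Lemma~\ref{l:2crpts}.

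For part (2), given part (1), suppose some $z\in Z(f)$ is a critical value of $f$, i.e. $z=f(\om_i(f))$ for $i=1$ or $i=2$. By definition of $Z(f)$ there is a deformation $\zeta$ with $\zeta(f)=z$, $g^{\circ m}(\zeta(g))=\om_1(g)$ on $\Uc$, and $\zeta(g)\in A(g)$ for $g\in\Uc\cap\Ac$. If $z=f(\om_2(f))$, then for $g\in\Uc\cap\Ac$ the point $g(\om_2(g))$ is close to $z$, and by continuity it should be close to $\zeta(g)$; pinning this down (the deformation $\zeta$ and the holomorphic function $g\mapsto g(\om_2(g))$ both equal $z$ at $f$ and both are holomorphic, but they need not coincide) I would instead argue: $\zeta(g)\in A(g)$, $g^{\circ m}(\zeta(g))=\om_1(g)$, so $\zeta(g)$ is an iterated preimage of $\om_1(g)$ inside $A(g)$; its unique $g$-preimage chain includes a point mapping to it, and if $\zeta(f)$ is the critical value $f(\om_2(f))$ then $\om_2(f)$ is a preimage of a point in $\ol{A}$-orbit of $\om_1$, so an iterate of $\om_2(f)$ equals $\om_1(f)$ or lands in the critical orbit—and combined with part (1) and holomorphy this pushes $\om_2(g)$ into $A(g)$ for nearby $g\in\Ac$, again giving $[g]\in\phd_3$, a contradiction. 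If instead $z=f(\om_1(f))$, then $\om_1(f)$ is a preimage of $z\in Z(f)$ with $z$ a preimage of $\om_1(f)$; one checks $\om_1(f)\in Z(f)$ (it is a $0$-step preimage of itself, with deformation $\om_1$), so this would make $\om_1(f)$ periodic under a branch of pullback—i.e. $\om_1(f)$ would be a periodic point in $A(g)$ for $g\in\Ac$ distinct from the attracting fixed point $0$, forcing $\om_1(g)$ into a periodic cycle inside $A(g)$ other than $\{0\}$; but the only attracting cycle in $A(g)$ is $\{0\}$ and critical points in immediate basins are not periodic unless the fixed point is superattracting, and working this out gives $g'(0)=0$, i.e. $g$ lies in a thin set one can perturb off of, again reaching $[g]\in\phd_3$. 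The routine parts here are the continuity/holomorphy bookkeeping; the conceptual content is entirely in (1) and in the observation that any critical relation for $\om_1$ or $\om_2$ propagates holomorphically to nearby attracting-basin maps and forces the class into $\phd_3$.
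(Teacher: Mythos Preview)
Your part (1), after some unnecessary meandering through ``$\Cc$ either contains an open set or is proper'', lands on exactly the paper's argument: form the algebraic curve $\Cc=\{g:g^{\circ m}(\om_1(g))=\om_2(g)\}$, move along it into $\Ac$, and observe that for $g\in\Cc\cap\Ac$ the relation forces $\om_2(g)\in A(g)$, hence $[g]\in\phd_3$, contradicting the standing assumption on $f$. The nonconstancy issue you flag is handled in the paper just as in Lemma~\ref{l:2crpts}.

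Your part (2), however, has a real gap. You split into the cases $z=f(\om_1(f))$ and $z=f(\om_2(f))$ and then argue by continuity at $f$. The difficulty you yourself note---that the deformation $\zeta$ and the map $g\mapsto g(\om_i(g))$ agree at $f$ but need not agree nearby---is genuine and you never resolve it: you never set up a curve along which the relation $\zeta(g)=g(\om_i(g))$ persists. Without that, sentences like ``this pushes $\om_2(g)$ into $A(g)$'' have no justification. And even granting $g(\om_2(g))\in A(g)$, this alone does not give $\om_2(g)\in A(g)$, since $A(g)$ is only the \emph{immediate} basin.

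The paper's argument is both uniform and short. It forms the curve $\Cc=\{g\in\Uc:\zeta(g)\text{ is a critical value of }g\}$, moves along it into $\Ac$ to find $g_n\in\Cc\cap\Ac$ close to $f$, and then uses a counting step you are missing: since $g_n|_{A(g_n)}$ has degree two, the point $\zeta(g_n)\in A(g_n)$ has exactly two $g_n$-preimages in $A(g_n)$ counting multiplicity, while globally it has three, one of which is a double critical preimage; if that critical preimage lay outside $A(g_n)$, only one preimage would remain for $A(g_n)$, a contradiction. Hence some critical point $d_n$ with $g_n(d_n)=\zeta(g_n)$ lies in $A(g_n)$. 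Finally $d_n\ne\om_1(g_n)$, since otherwise $g_n^{\circ n}(\zeta(g_n))=\om_1(g_n)$ together with $g_n(\om_1(g_n))=\zeta(g_n)$ would make $\om_1(g_n)$ periodic, impossible for a non-fixed point in an attracting basin. So $d_n=\om_2(g_n)\in A(g_n)$ and $[g_n]\in\phd_3$.
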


\begin{proof}
Suppose first that $\om_1(f)$ is eventually mapped to $\om_2(f)$, say,
$f^{\circ m}(\om_1(f))=\om_2(f)$, and the number $m$ is the minimal
positive integer with this property. Consider the set $\Cc$ of all
$g\in\Uc$ such that $g^{\circ m}(\om_1(g))=\om_2(g)$. This set is an
open part (not necessarily connected) of an algebraic curve. The
function $g\mapsto g'(0)$ is a complex analytic function on $\Cc$. By
Lemma~\ref{l:om12} this function is not a constant. Since the value of
this function at $f$ lies in $\cdisk$, there are maps $g\in\Cc$
arbitrarily close to $f$ such that $|g'(0)|<1$. The class of any such
$g$ must belong to $\phd_3$. Indeed, the attracting basin $A(g)$ must
contain the principal critical point $\om_1(g)$, by definition of the
principal critical point. Since $\om_1(g)$ is eventually mapped to
$\om_2(g)$, the critical point $\om_2(g)$ is also contained in $A(g)$.
We arrive at a contradiction with our assumption on $f$.

Suppose now that $v\in Z(f)$ is a critical value. Let
$\zeta:\Uc\to\C$ be a deformation of $v$. Consider the set $\Cc$ of
all $g\in\Uc$ such that $\zeta(g)$ is a critical value.
This set is
a part of an algebraic curve. Take a sequence $g_n\in\Cc\cap\Ac$
that converges to $f$. Since $\zeta(g_n)\in A(g_n)$ is a
critical value with at least two $g_n$-preimages in $A(g_n)$,
counting multiplicities, the set $A(g_n)$ must contain a critical
point $d_n$ with $g_n(d_n)=\zeta(g_n)$. The fact that
$\omega_1(g_n)$ is not periodic implies that $d_n\ne \omega_1(g_n)$.
Thus, both critical points of $g_n$ are contained in $A(g_n)$, and
so $[g_n]\in\phd_3$. We again arrive at a
contradiction with our assumption on $f$.
\end{proof}

\begin{lem}
  \label{l:2pts}
For every $z\in Z(f)$, there are exactly two points of $Z(f)$ that
are mapped to $z$ under $f$.
\end{lem}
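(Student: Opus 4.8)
\emph{Strategy.} Transfer the question to nearby attracting polynomials $g\in\Ac$, where the dynamics on the immediate basin $A(g)$ is completely transparent. Fix a deformation $\zeta\colon\Uc\to\C$ of $z\in Z(f)$, with $g^{\circ n}(\zeta(g))=\om_1(g)$ on $\Uc$. By Lemma~\ref{l:no-crval}(2), $z$ is not a critical value of $f$, so $f^{-1}(z)=\{w_1,w_2,w_3\}$ consists of three distinct non-critical points; shrinking $\Uc$, the equation $g(w)=\zeta(g)$ has near each $w_j$ a unique holomorphic solution $\zeta_j\colon\Uc\to\C$ with $\zeta_j(f)=w_j$, so that $\{\zeta_1(g),\zeta_2(g),\zeta_3(g)\}=g^{-1}(\zeta(g))$. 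Since $g^{\circ(n+1)}(\zeta_j(g))=g^{\circ n}(\zeta(g))=\om_1(g)$, each $\zeta_j$ satisfies the first two clauses in the definition of $Z(f)$, so $w_j\in Z(f)$ precisely when $\zeta_j$ is a deformation, i.e. when $\zeta_j(g)\in A(g)$ for all $g\in\Ac$ near $f$ (the nontrivial direction of this equivalence---that any deformation of $w_j$ must agree with $\zeta_j$ near $f$---follows from the uniqueness of deformations established below). Thus it suffices to show that exactly two of $\zeta_1,\zeta_2,\zeta_3$ are deformations.

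\emph{The local model near $f$.} Pick a small neighbourhood $\Uc'\subset\Uc$ of $f$ with $\Uc'\cap\Ac$ connected, on which every $g$ has two distinct simple critical points (Lemma~\ref{l:2crpts}) and no $g$ has $[g]\in\phd_3$ (this holds because $f$ is potentially renormalizable). For $g\in\Uc'\cap\Ac$ the component $A(g)$ is bounded, hence simply connected, and $g|_{A(g)}\colon A(g)\to A(g)$ is a proper holomorphic surjection; by Riemann--Hurwitz its degree is $1$ plus the number of critical points of $g$ in $A(g)$ counted with multiplicity. By the definition of the principal critical point (Theorem~\ref{t:princ}) one has $\om_1(g)\in A(g)$, while $\om_2(g)\notin A(g)$---otherwise $[g]\in\phd_3$; hence $g|_{A(g)}$ has degree exactly $2$. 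Moreover $\om_1(g)$ is not periodic: the only periodic point of $A(g)$ is the attracting fixed point $0$, and $\om_1(g)=0$ is impossible for $g$ near $f$ since $\om_1(f)\ne0$---indeed $\om_1(f)\in Z(f)$, so $\om_1(f)=0$ would make $\om_1(f)$ the critical value $f(0)$, against Lemma~\ref{l:no-crval}(2). Therefore $\zeta(g)\in A(g)$, being an iterated $g$-preimage of $\om_1(g)$, cannot equal the critical value $g(\om_1(g))$ of the degree-$2$ map $g|_{A(g)}$ (that would make $\om_1(g)$ periodic), so it has exactly two $g$-preimages in $A(g)$, necessarily two of the $\zeta_j(g)$. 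On the other hand the three points $\zeta_j(g)$ are never all in $A(g)$: then $g|_{A(g)}$ would have degree $3=\deg g$, forcing both critical points of $g$ into $A(g)$ and hence $[g]\in\phd_3$. So for every $g\in\Uc'\cap\Ac$ exactly two of $\zeta_1(g),\zeta_2(g),\zeta_3(g)$ lie in $A(g)$.

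\emph{Conclusion by connectedness.} The sets $O_j=\{g\in\Uc'\cap\Ac:\zeta_j(g)\in A(g)\}$ are open (if $\zeta_j(g)\in A(g)$, some iterate $g^{\circ m}(\zeta_j(g))$ lies in a round disk $D\ni0$ with $g(\ol D)\subset D$, and both facts persist under small perturbation). By the previous paragraph the complement of $O_j$ in $\Uc'\cap\Ac$ equals $O_k\cap O_l$ with $\{j,k,l\}=\{1,2,3\}$, which is open as well; hence each $O_j$ is clopen in the connected set $\Uc'\cap\Ac$, and exactly two of them, say $O_1,O_2$, equal $\Uc'\cap\Ac$ while $O_3=\emptyset$. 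Thus $\zeta_1,\zeta_2$ are deformations, so $w_1,w_2\in Z(f)$. Finally $w_3\notin Z(f)$: a deformation $\eta$ of $w_3$ would yield a deformation $g\mapsto g(\eta(g))$ of $z$; but the deformation of $z$ is unique, because the forward orbit $z,f(z),\dots,f^{\circ(n-1)}(z)$ avoids the critical points of $f$---each $f^{\circ k}(z)$, $0\le k\le n$, lies in $Z(f)$, hence is neither $\om_1(f)$ (not periodic) nor $\om_2(f)$ (else $f^{\circ(k+1)}(z)\in Z(f)$ would be the critical value $f(\om_2(f))$, against Lemma~\ref{l:no-crval}(2))---so $f^{\circ n}$ is a local homeomorphism at $z$; consequently $g(\eta(g))\equiv\zeta(g)$ near $f$ and $\eta=\zeta_3$ near $f$, contradicting $O_3=\emptyset$. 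Hence exactly $w_1$ and $w_2$ are mapped into $Z(f)$ by $f$.

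\emph{Main difficulty.} The crux is the local assertion that for $g\in\Ac$ near $f$ exactly two of $\zeta_1(g),\zeta_2(g),\zeta_3(g)$ lie in $A(g)$: this combines pinning down $\deg(g|_{A(g)})=2$ (which uses both $[g]\notin\phd_3$ and the simple connectivity of $A(g)$), the non-periodicity of $\om_1$ (so that $\zeta(g)$ is never a critical value of $g|_{A(g)}$), and---for the third preimage---the uniqueness of deformations, which forces one to control the critical points met by the forward orbit of $z$ through Lemma~\ref{l:no-crval}. The degenerate parameter $\la=0$, in which $0$ itself is a critical point, is absorbed by the single remark that $\om_1(f)\ne0$.
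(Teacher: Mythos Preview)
Your proof is correct and follows essentially the same approach as the paper's: both set up the three holomorphic preimage branches $\zeta_1,\zeta_2,\zeta_3$ and run an open-cover argument over the connected set $\Uc\cap\Ac$ to pin down which two branches land in $A(g)$. The only differences are organizational---you work with the singleton sets $O_j=\{g:\zeta_j(g)\in A(g)\}$ and show each is clopen, while the paper works with the pair sets $\Oc_{ij}=\{g:\zeta_i(g),\zeta_j(g)\in A(g)\}$ and shows only one can be nonempty---and you supply an explicit uniqueness-of-deformations argument to exclude $w_3\in Z(f)$, a point the paper handles in one line; your Riemann--Hurwitz computation of $\deg(g|_{A(g)})=2$ also makes explicit what the paper leaves implicit in the equality $\Uc\cap\Ac=\Oc_{12}\cup\Oc_{23}\cup\Oc_{13}$.
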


\begin{proof}
The proof is similar to that of Theorem \ref{t:princ}. Let
$\zeta:\Uc\to\C$ be a deformation of $z$. Since the set $Z(f)$
cannot contain a critical value of $f$, there are three holomorphic
functions $\zeta_1$, $\zeta_2$, $\zeta_3$ defined on $\Uc$ and such
that $g(\zeta_i(g))=\zeta(g)$ (we may need to pass to a smaller
neighborhood $\Uc$ to arrange this).

The intersection $\Uc\cap\Ac$ is convex, hence connected. For any
2-element subset $\{i,j\}\subset\{1,2,3\}$, define a subset
$\Oc_{ij}\subset\Uc\cap\Ac$ as the set of all polynomials
$g\in\Uc\cap\Ac$ such that $\zeta_i(g)\in A(g)$ and $\zeta_j(g)\in
A(g)$. All three sets $\Oc_{12}$, $\Oc_{23}$ and $\Oc_{13}$ are open
(cf. the proof of Theorem \ref{t:princ}). On the other hand, we have
$$
\Ac\cap\Uc=\Oc_{12}\cup\Oc_{23}\cup\Oc_{13}.
$$
Hence either only one of the sets $\Oc_{ij}$ is nonempty, or at
least two of the sets $\Oc_{ij}$ intersect. In the latter case,
$\zeta_i(g)\in A(g)$ for some $g\in\Ac\cap\Uc$ and all $i=1$, $2$,
$3$. It follows that $[g]\in\phd_3$. Since the neighborhood $\Uc$
can be chosen to be arbitrarily small, it follows that
$f$ can be approximated by polynomials in $\Ac$, whose classes are in $\phd_3$,
a contradiction. The contradiction shows that
only one of the sets $\Oc_{ij}$ is nonempty, for a suitable choice
of the neighborhood $\Uc$. Assume that $i=1$ and $j=2$; then
$\zeta_1(f)$, $\zeta_2(f)\in Z(f)$ but $\zeta_3(f)\not\in Z(f)$.
\end{proof}

The proof of Lemma \ref{l:2pts} implies a stronger claim below.

\begin{cor}
  \label{c:2pts}
Let $\zeta_i$ be holomorphic functions introduced in the proof of
Lemma \ref{l:2pts}. Suppose that $\zeta_1(f)$, $\zeta_2(f)\in Z(f)$.
Then there is a neighborhood $\Uc$ of $f$ in $\Fc$ such that
$\zeta_3(g)\not\in A(g)$ for all $g\in\Uc\cap\Ac$.
\end{cor}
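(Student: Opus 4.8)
The plan is not to argue anew but to read the statement off the proof of Lemma~\ref{l:2pts}. Recall the data fixed there: a deformation $\zeta:\Uc\to\C$ of $z$ and, after shrinking $\Uc$, three holomorphic functions $\zeta_1,\zeta_2,\zeta_3$ on $\Uc$ with $g(\zeta_i(g))=\zeta(g)$ for all $i$; by Lemma~\ref{l:no-crval}(2) the point $z=\zeta(f)$ is not a critical value of $f$, so we may assume $\zeta_1(g),\zeta_2(g),\zeta_3(g)$ are pairwise distinct for every $g\in\Uc$, and also that no $g\in\Uc\cap\Ac$ has $[g]\in\phd_3$.

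The first step is the dichotomy that underlies that proof: for every $g\in\Uc\cap\Ac$ exactly two of the points $\zeta_1(g),\zeta_2(g),\zeta_3(g)$ lie in $A(g)$. Indeed, since $[g]\notin\phd_3$, the map $g|_{A(g)}$ is two-to-one, so $g^{-1}(\zeta(g))\cap A(g)$ consists of two points counted with multiplicity; as $\zeta(g)\in A(g)$ has three distinct $g$-preimages it is not a critical value of $g$, whence this set has exactly two elements, necessarily two of the values $\zeta_i(g)$. Thus, with $\Oc_{12},\Oc_{23},\Oc_{13}$ as in the proof of Lemma~\ref{l:2pts}, the connected set $\Uc\cap\Ac$ is their \emph{disjoint} union. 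The proof of Lemma~\ref{l:2pts} shows that, for a suitable $\Uc$, only one of these open sets is nonempty; hence $\Uc\cap\Ac$ equals that one, say $\Oc_{pq}$, and therefore $\zeta_r(g)\notin A(g)$ for all $g\in\Uc\cap\Ac$, where $r$ is the index outside $\{p,q\}$.

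It remains to identify $r$ with $3$. Reading off $\Uc\cap\Ac=\Oc_{pq}$ exactly as in Lemma~\ref{l:2pts}, one sees that $\zeta_p$ and $\zeta_q$ are deformations, so $\zeta_p(f),\zeta_q(f)\in Z(f)$; by Lemma~\ref{l:2pts} these are the only two of $\zeta_1(f),\zeta_2(f),\zeta_3(f)$ lying in $Z(f)$. Since by hypothesis $\zeta_1(f),\zeta_2(f)\in Z(f)$, this forces $\{p,q\}=\{1,2\}$ and $r=3$, giving $\zeta_3(g)\notin A(g)$ for every $g\in\Uc\cap\Ac$.

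There is no real obstacle here; the argument is essentially bookkeeping on top of Lemma~\ref{l:2pts}. The one point that deserves care is the exactly-two-preimages dichotomy, which rests only on facts already in hand: $g|_{A(g)}$ is two-to-one for $g\in\Uc\cap\Ac$ since $[g]\notin\phd_3$, and $\zeta(g)$ stays away from the critical values of $g$ on a small enough $\Uc$ because $z$ does for $f$ and only finitely many open conditions are imposed.
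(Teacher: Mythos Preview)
Your proof is correct and follows exactly the route the paper intends: the paper gives no separate argument for Corollary~\ref{c:2pts} but simply says ``The proof of Lemma~\ref{l:2pts} implies a stronger claim below,'' and you have spelled out precisely how --- once only one $\Oc_{pq}$ survives, the hypothesis $\zeta_1(f),\zeta_2(f)\in Z(f)$ pins down $\{p,q\}=\{1,2\}$, and then $\zeta_3(g)\notin A(g)$ follows since otherwise all three preimages lie in $A(g)$ and $[g]\in\phd_3$. Your extra care in noting that $g|_{A(g)}$ is two-to-one (hence exactly two of the $\zeta_i(g)$ lie in $A(g)$, making the $\Oc_{ij}$ genuinely disjoint) is a clean way to package the same point.
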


We will need the following lemma.

\begin{lem}\label{l:tophd}
If $|\la|<1$, then the set of all polynomials $f\in \Fc_\la$
such that both critical points of $f$ belong to the immediate basin of
attraction of $0$ is an open simply connected domain. Moreover, if
$|\la|\le 1$, then the set $\Pc_\la$ is connected.
\end{lem}

\begin{proof}
Suppose that $|\la|<1$.
Consider the branched covering map $\Psi$ of $\C$ by $\Fc_\la$ sending $f_{\la,b}$ to $b^2$.
The fibers of this map are exactly affine conjugacy classes in $\Fc_\la$.
By parts b2) and c) of \cite[Theorem $A'$]{peta09}, the $\Psi$-image of $\Pc_\la$ is a closed Jordan disk
(see also \cite{pita04}).
The set $\Pc_\la$ is therefore a disk or a pair of disks depending on whether or not the polynomial $f=f_{\la,0}(z)=\la z+z^3$ belongs to $\Pc_\la$.
Observe that $f$ is odd, therefore, critical points are antipodal and the basin $A(f)$ of immediate attraction
of $0$ is centrally symmetric with respect to the origin. Since at least one of the critical points must belong
to $A(f)$, then both do which implies that $f\in \Pc_\la$ as desired.

Suppose now that $|\la|\le 1$.
By \cite{peta09}, the set $\Pc_{\la'}$ depends continuously on $\la'$.
The set $\Pc_\la$ is the upper limit of $\Pc_{\la'}$ as $\la'\to\la$.
Since the upper limit of a continuous family of connected sets (over a locally connected base) is connected, the set $\Pc_\la$ is connected.
\end{proof}

Lemma~\ref{l:tophd} was included at the suggestion of the referee; we are indebted to him/her
for this suggestion.

\begin{prop}
  \label{p:Lambda}
For every $z\in Z(f)$, there is a holomorphic function
$\zeta:\Wc_f\to\C$ such that $\zeta(h)\in Z(h)$ for all
$h\in\Wc_f$ and $\zeta(f)=z$.
\end{prop}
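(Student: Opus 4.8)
The plan is to promote each locally-defined deformation $\zeta:\Uc\to\C$ of a point $z\in Z(f)$ to a globally-defined holomorphic function on the whole potentially renormalizable component $\Wc_f$, by analytic continuation along paths, and then to verify that the continuation stays single-valued and lands in $Z(h)$. Fix $z\in Z(f)$ with a deformation $\zeta:\Uc\to\C$, so that $g^{\circ n}(\zeta(g))=\omega_1(g)$ on $\Uc$ for a fixed $n$, and $\zeta(g)\in A(g)$ for $g\in\Uc\cap\Ac$. First I would extend $\omega_1$: by Theorem~\ref{t:princ} and the remark following it, $\omega_1$ is a well-defined holomorphic function near every $h\in\Fc_\la\sm\Pc_\la$, and since $\Wc_f$ is connected these local branches patch into a single holomorphic function $\omega_1:\Wc_f\to\C$ (the two critical points never collide on $\Wc_f$ by Lemma~\ref{l:2crpts}, so there is no monodromy swapping them; and the one that is ``principal'' is locally constant in the obvious sense, hence globally consistent).

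Next, given any path $\gamma$ in $\Wc_f$ from $f$ to $h$, I would continue $\zeta$ along $\gamma$. At each stage the continued branch $\zeta$ satisfies the polynomial identity $g^{\circ n}(\zeta(g)) - \omega_1(g)=0$, so $\zeta(g)$ is always one of the finitely many solutions of this equation in $z$; continuation is therefore possible as long as we avoid the discrete set where $\zeta(g)$ is a multiple root, i.e.\ where $\zeta(g)$ is an iterated critical value. By Lemma~\ref{l:no-crval}(2) applied at every point of $\Wc_f$ (every such point is potentially renormalizable, so the lemma applies), $Z(g)$ contains no critical values; combined with Lemma~\ref{l:2pts}, which guarantees exactly two preimages in $Z$ of each point of $Z$, this lets me argue inductively on $n$ that the relevant branch never degenerates. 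Concretely: the case $n=0$ is $\zeta=\omega_1$, already handled; for the inductive step, $\zeta(g)$ is one of the two preimages in $Z(g)$ of a point $\zeta'(g)\in Z(g)$ whose deformation $\zeta'$ already continues over $\Wc_f$ by induction, and Corollary~\ref{c:2pts} tells me which of the three $g$-preimages of $\zeta'(g)$ stays in $A(g)$ over $\Ac$, so the correct branch is singled out locally in a way compatible along overlaps. This gives a holomorphic $\zeta:\Wc_f\to\C$ with $\zeta(f)=z$.

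It remains to check $\zeta(h)\in Z(h)$ for every $h\in\Wc_f$, i.e.\ that $\zeta$ is a legitimate deformation at $h$: the identity $h^{\circ n}(\zeta(h))=\omega_1(h)$ holds by continuation, the map $\zeta$ is holomorphic on a neighborhood of $h$ by construction, and the condition $\zeta(g)\in A(g)$ for $g$ near $h$ in $\Ac$ follows because this is an open condition (the $\Oc$-type openness argument from the proof of Theorem~\ref{t:princ}) that holds at $f$, is preserved along the continuation by the selection of branches dictated by Corollary~\ref{c:2pts}, and hence holds on a neighborhood of every point of the path. Single-valuedness — that the continuation does not depend on $\gamma$ — follows from the monodromy theorem once I know $\Wc_f$ need not be simply connected: here I instead use that at each $h$ the value $\zeta(h)$ is pinned down uniquely, being the unique preimage-branch lying in $A(g)$ over $\Ac$ near $h$ (uniqueness is exactly the content of the ``only one $\Oc_{ij}$ nonempty'' conclusion of Lemma~\ref{l:2pts} together with Corollary~\ref{c:2pts}), so any two continuations agree on an open set and hence everywhere.

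The main obstacle I expect is precisely the single-valuedness/branch-selection bookkeeping: $\Wc_f$ is merely open and connected, not obviously simply connected, so one cannot invoke the monodromy theorem directly, and one must show that the ``stays in $A(g)$'' criterion really does select a unique branch globally rather than just locally. The inductive structure on the number $n$ of iterations, feeding Lemma~\ref{l:2pts} and Corollary~\ref{c:2pts} at each level, is what makes this work: at every node the ambiguity among the (at most three) preimages is resolved by the dynamical location relative to $A(g)$, and Lemma~\ref{l:no-crval} ensures the branches never run into each other. Everything else — holomorphicity, the functional identity, openness of the $A(g)$-condition — is routine once the branch is correctly identified.
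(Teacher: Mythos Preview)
Your overall architecture matches the paper's: induction on the depth $n$, with the base case $\omega_1$ extended over $\Wc_f$, and the inductive step handled by pulling back one level via the multivalued function $h\mapsto h^{-1}(\eta(h))$, using Lemma~\ref{l:no-crval} to rule out branch points and Lemma~\ref{l:2pts} to count preimages. The paper then finishes with an open-and-closed argument on the set $\{h\in\Wc_f:\zeta(h)\in Z(h)\}$: openness is immediate, and closedness comes from Lemma~\ref{l:2pts}, since a failure at a limit $h$ would force all three local preimages of $\eta(h)$ into $Z(h)$. Your proof replaces this with a more explicit path-continuation picture, which is fine, but you do not reproduce the closedness step; you only note that membership in $A(g)$ is an open condition, and that alone does not propagate the condition along an entire path.

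There is a concrete error in your single-valuedness argument. You write that ``at each $h$ the value $\zeta(h)$ is pinned down uniquely, being the unique preimage-branch lying in $A(g)$ over $\Ac$ near $h$,'' and you cite the ``only one $\Oc_{ij}$ nonempty'' conclusion of Lemma~\ref{l:2pts} together with Corollary~\ref{c:2pts}. But that conclusion says exactly \emph{two} of the three preimage-branches lie in $A(g)$, not one: the single nonempty $\Oc_{ij}$ records a \emph{pair}, and Corollary~\ref{c:2pts} only excludes the third branch $\zeta_3$. The $A(g)$ criterion therefore cannot distinguish $\zeta_1$ from $\zeta_2$, so it does not by itself rule out monodromy swapping the two $Z$-branches around a loop in $\Wc_f$. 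The paper's proof, for its part, simply asserts that a branch with $\zeta(f)=z$ exists once branch points are absent, and does not address this monodromy issue explicitly either; but since you raise the point, your resolution of it is incorrect as stated. If you want to repair this, you would need an argument genuinely separating the two $Z$-branches globally (for instance, showing $\zeta_3$ is globally single-valued and then controlling the remaining quadratic factor), rather than appealing to the $A(g)$ condition, which both branches satisfy.
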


\begin{proof}
The function $\zeta$ with these properties is defined at least on
some open neighborhood of $f$ in $\Wc_f$, by definition of the set
$Z(f)$. Assume by induction that the statement of the proposition
holds for the point $f(z)$, i.e., there is a holomorphic function
$\eta:\Wc_f\to\C$ such that $\eta(h)\in Z(h)$ for all $h\in\Wc_f$
and $\eta(f)=f(z)$. It follows that there is an integer $n$ such
that $h^{\circ (n-1)}(\eta(h))=\omega_1(h)$ for all $h\in\Wc_f$.
Consider the multivalued analytic function $h\mapsto
h^{-1}(\eta(h))$. If this function has no branch points in $\Wc_f$,
then we can define the holomorphic function $\zeta$ as the branch of
this function such that $\zeta(f)=z$. Suppose that there is a branch
point $h_0$ of the multivalued function $h\mapsto h^{-1}(\eta(h))$.
Then the point $\eta(h_0)$ is a critical value of $h_0$, a
contradiction with Lemma \ref{l:no-crval}.

By Lemma \ref{l:tophd}, the set $\Wc_f$ is simply connected, therefore, a multivalued analytic function without branching is necessarily a disjoint union of single valued branches.
Thus, there is a holomorphic function $\zeta:\Wc_f\to\C$ with
$h(\zeta(h))=\eta(h)$, and $\zeta(f)=z$. Moreover, $\zeta(h)\in Z(h)$
for all $h\in\Wc_f$ sufficiently close to $f$. It suffices to prove
that $\zeta(h)\in Z(h)$ for all $h\in\Wc_f$. To this end, we will prove
that the set of polynomials $h\in\Wc_f$ such that $\zeta(h)\in Z(h)$ is
open and closed in $\Wc_f$. The openness is obvious. Consider a
sequence $h_n\in\Wc_f$ converging to some polynomial $h\in\Wc_f$, and
suppose that $\zeta(h_n)\in Z(h_n)$ but $\zeta(h)\not\in Z(h)$.
Therefore, there are two other holomorphic functions $\zeta_1$,
$\zeta_2$ defined on some neighborhood of $h$ such that $\zeta_i(h)\in
Z(h)$, $i=1$, $2$. It follows that $\zeta_i(h_n)\in Z(h_n)$ for
sufficiently large $n$. But then all three points $\zeta_1(h_n)$,
$\zeta_2(h_n)$ and $\zeta(h_n)$ are preimages of $\eta(h_n)$ in
$Z(h_n)$. This contradicts Lemma \ref{l:2pts}.
\end{proof}

Proposition \ref{p:Lambda} and Lemma~\ref{l:no-crval} imply the
following theorem.

\begin{thm}
  \label{t:holmot}
  There exists a holomorphic motion $\mu:Z(f)\times\Wc_f\to\C$
  that is \emph{equivariant} in the sense that for every $h\in\Wc_f$,
  and for every $z\in Z(f)\setminus\{\omega_1(f)\}$,
  we have $h(\mu(z,h))=\mu(f(z),h)$.
\end{thm}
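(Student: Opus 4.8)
The plan is to construct the holomorphic motion $\mu$ vertex-by-vertex, i.e.\ for each $z \in Z(f)$ separately, using Proposition~\ref{p:Lambda}. For a fixed $z \in Z(f)$, Proposition~\ref{p:Lambda} produces a holomorphic function $\zeta_z : \Wc_f \to \C$ with $\zeta_z(f) = z$ and $\zeta_z(h) \in Z(h)$ for all $h \in \Wc_f$. Define $\mu(z, h) = \zeta_z(h)$. Holomorphy in the second variable (for each fixed $z$) is then immediate, and the normalization condition $\mu(z,f) = z$ holds by construction with $\nu_0 = f$. So the only property of a holomorphic motion that needs work is \emph{injectivity}: for $z \ne z'$ in $Z(f)$ and every $h \in \Wc_f$, we must show $\zeta_z(h) \ne \zeta_{z'}(h)$.

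For injectivity I would argue by the same open-and-closed argument used in Proposition~\ref{p:Lambda}. Fix distinct $z, z' \in Z(f)$ and let $S = \{ h \in \Wc_f : \zeta_z(h) \ne \zeta_{z'}(h)\}$. Since $\zeta_z - \zeta_{z'}$ is holomorphic on the connected open set $\Wc_f$ and is nonzero at $f$, either it vanishes identically (impossible, as $\zeta_z(f) = z \ne z' = \zeta_{z'}(f)$) or its zero set is discrete; so $S$ is $\Wc_f$ minus a discrete set. To upgrade this to all of $\Wc_f$, suppose for contradiction that $\zeta_z(h_0) = \zeta_{z'}(h_0)$ for some $h_0 \in \Wc_f$. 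Let $n$, $n'$ be the integers with $h^{\circ n}(\zeta_z(h)) = \omega_1(h)$ and $h^{\circ n'}(\zeta_{z'}(h)) = \omega_1(h)$ for all $h \in \Wc_f$; WLOG $n \le n'$. Applying $h_0^{\circ n}$ to the common value shows $\omega_1(h_0) = h_0^{\circ(n'-n)}(\omega_1(h_0))$, so $\omega_1(h_0)$ is periodic for $h_0$. Now pick $h_k \in \Wc_f \cap \Ac$ with $h_k \to h_0$ (possible since $h_0$ lies in the potentially renormalizable component $\Wc_f$ and such maps are limits of maps in $\Ac$); by continuity $\omega_1(h_k)$ is close to $\omega_1(h_0)$. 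One then argues that a periodic (or nearly-periodic, after a perturbation along the curve where $\omega_1$ is exactly periodic of the relevant period) principal critical point forces $\omega_1(h_k) \in A(h_k)$ to be in an attracting or parabolic cycle; combined with a perturbation onto the algebraic curve $\{\omega_1 \text{ periodic of that period}\}$ inside $\Ac$, this yields maps $g \in \Ac$ arbitrarily close to $f$ with $[g] \in \phd_3$, contradicting the standing assumption that $f \in \Fc_\la \sm \Pc_\la$. Hence $S = \Wc_f$, giving injectivity. (Alternatively, and more cleanly, if $\zeta_z(h_0) = \zeta_{z'}(h_0)$ then in a neighborhood of $h_0$ the two distinct backward branches through $z$ and $z'$ collide, which forces a critical value of $h_0$ somewhere along the common backward orbit, contradicting Lemma~\ref{l:no-crval}; I would use whichever of these runs more smoothly.)

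Equivariance is then a formal consequence of how the $\zeta_z$ are built. For $z \in Z(f) \sm \{\omega_1(f)\}$, the construction in Proposition~\ref{p:Lambda} is inductive along the forward orbit: the deformation $\zeta_z$ of $z$ satisfies $h(\zeta_z(h)) = \zeta_{f(z)}(h)$ for all $h \in \Wc_f$, where $\zeta_{f(z)}$ is the deformation of $f(z)$. In terms of $\mu$ this reads $h(\mu(z,h)) = \mu(f(z), h)$, which is exactly the asserted equivariance. The only point to check is consistency: the deformation of $z$ used to \emph{define} $\mu(z,\cdot)$ should be the same function as the one arising as a backward branch over the deformation of $f(z)$; this holds because, by the injectivity just established, a holomorphic function $\Wc_f \to \C$ with value $z$ at $f$ and taking values in $Z(h)$ is unique — any two such agree at $f$ and hence everywhere by the discreteness-of-zeros argument, upgraded exactly as above. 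So the $\zeta_z$ form a coherent family, $\mu$ is well-defined, and equivariance follows.

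The main obstacle is the injectivity step, specifically ruling out the collision $\zeta_z(h_0) = \zeta_{z'}(h_0)$ at an interior point $h_0 \in \Wc_f$ that is not itself in $\Ac$. The difficulty is that the defining properties of $Z(h)$ only directly constrain $h \in \Uc \cap \Ac$, so the argument must propagate information from the approximating maps in $\Ac$ to the limit $h_0$; this is precisely the kind of step where one leans on the hypothesis $[f] \notin \ol\phd_3$ together with the fact that $\phd_3$ is characterized by \emph{both} critical points lying in the immediate basin. One must be careful that the perturbation producing maps in $\phd_3$ stays inside the relevant slice or family and genuinely approaches $f$; the cleaner route via Lemma~\ref{l:no-crval} (a collision of backward branches $\Rightarrow$ a critical value on the common backward orbit) avoids this and is likely the one to write up.
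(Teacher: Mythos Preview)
Your proposal is correct and matches the paper's approach: the paper's proof is a single line stating that Proposition~\ref{p:Lambda} and Lemma~\ref{l:no-crval} imply the theorem, and you have correctly identified these as the two ingredients and fleshed out how they combine. Your preferred route via Lemma~\ref{l:no-crval} is indeed the intended one and handles the $n\ne n'$ case as well (if, say, $n<n'$, the collision gives $\omega_1(h_0)=\zeta_{f^n(z')}(h_0)$, so $h_0(\omega_1(h_0))=\zeta_{f^{n+1}(z')}(h_0)\in Z(h_0)$ is a critical value of $h_0$), so the periodic-critical-point detour is unnecessary.
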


By the $\lambda$-lemma, the holomorphic motion $\mu$ gives rise to
the holomorphic motion $\ol\mu:\ol{Z(f)}\times\Wc_f\to\C$. Since
$\mu$ is equivariant, the holomorphic motion $\ol\mu$ is equivariant
too.

\subsection{The set $X(f)$}
\label{ss:X_f} Let $Z_n(f)$ be the subset of $Z(f)$ consisting of
all preimages of $\omega_1(f)$ mapped to $\omega_1(f)$ in $n$ steps,
in other words, $z\in Z_n(f)$ if $f^{\circ n}(z)=\omega_1(f)$. Define the set
$X(f)$ as the limit of the sets $Z_n(f)$, i.e.,
$$
X(f)=\bigcap_{m\ge 0}\ol{\bigcup_{n\ge m}Z_n(f)}.
$$
It is easy to see that $X(f)$ coincides with the set of all
non-isolated points in $\ol{Z(f)}$.
Theorem \ref{t:holmot} and the $\lambda$-lemma imply that the sets
$X(h)$ move holomorphically for $h\in\Wc_f$. Clearly, $X(h)$ is
forward invariant under $h$.

Let $P$ be a polynomial of degree $d$. Then $P$ on a small
neighborhood of any point $t$ is $k$-to-$1$ (at regular points $t$,
we have $k=1$, and at critical points $k>1$); the number $k$ is called the
\emph{multiplicity of $t$}.

\begin{lem}
  \label{l:Xf-2J}
Let $h\in \Wc_f$. Then the set $X(h)$ is a subset of the Julia set
$J(h)$, and every point $x\in X(h)$ has at least two preimages in
$X(h)$, counting multiplicities.
\end{lem}

\begin{proof}
The set $X(h)$ is contained in the accumulation set of the backward
orbit of $\omega_1(h)$. The backward orbit of a point can accumulate in
the Fatou set only if the point lies in a Siegel disk. However
$\omega_1(h)$ cannot lie in a Siegel disk as a Siegel disk contains no
critical points. The second claim follows from Lemma \ref{l:2pts}.
\end{proof}

Recall that, by the $\la$-lemma, $\ol\mu:\ol {Z(f)}\times \Wc_f\to
\C$ is continuous. In particular, if a sequence $z_n\in \ol{Z(f)}$
converges to $z\in \ol{Z(f)}$, then $\mu(z_n,h)$ converges to
$\mu(z,h)$, for every $h\in\Lambda$.

\begin{lem}
\label{l:no-par}
  The set $X(f)$ contains no neutral periodic points different from $0$.
\end{lem}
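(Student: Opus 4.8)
The plan is to argue by contradiction. Suppose $p\in X(f)$ is a neutral periodic point of period $n$ with $p\neq 0$ and multiplier $\rho=(f^{\circ n})'(p)$, so $|\rho|=1$. I would first propagate $p$ along the equivariant holomorphic motion, then show that the multiplier of the propagated periodic point is forced to be constant, and finally derive a contradiction by degenerating to a map whose periodic structure is already understood. For the first step: by Theorem~\ref{t:holmot} and the $\lambda$-lemma, $\ol\mu$ restricts to an equivariant holomorphic motion of $X(f)\subset\ol{Z(f)}$ over $\Wc_f$; moreover, repeating the arguments of Proposition~\ref{p:Lambda} and Theorem~\ref{t:holmot} over neighbourhoods in $\Fc$ rather than in $\Fc_\la$ — legitimate since the deformations of Section~\ref{ss:holmot} are defined over $\Fc$-neighbourhoods and, by the proof of Lemma~\ref{l:no-crval}, $Z(h)$ has no critical value for \emph{every} potentially renormalizable $h$ — one gets such a motion over the potentially renormalizable component $\Wc$ of $f$ in $\Fc$. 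Write $p_h=\ol\mu(p,h)$. Since $\om_1(f)$ is not periodic, it and its preimages avoid the orbit of $p$, so the equivariance relation holds along that orbit and gives $h^{\circ n}(p_h)=p_h$; thus $p_h$ is periodic of period dividing $n$, and $p_h\in X(h)\subset J(h)$ by Lemma~\ref{l:Xf-J}.

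Next, $\rho(h)=(h^{\circ n})'(p_h)$ is holomorphic on $\Wc$ with $\rho(f)=\rho$; since $p_h\in J(h)$ it is never attracting, so $|\rho|\ge 1$ on $\Wc$, and as $|\rho(f)|=1$ the maximum principle applied to $1/\rho$ forces $\rho\equiv\rho_0$ for a constant $\rho_0$ with $|\rho_0|=1$. So \emph{every} $h\in\Wc$ has a non-repelling periodic point $p_h$ on $J(h)$ with $(h^{\circ n})'(p_h)=\rho_0$. Now choose $g\in\Wc$ with $|g'(0)|<1$ (possible, since $[f]\notin\ol\phd_3$ is open and $f\in\ol\Ac$), so that on a $\C^2$-neighbourhood $\Uc\subset\Wc$ of $g$ all maps $h$ have $|h'(0)|<1$ and satisfy the above; then $p_h\in X(h)\subset\bd(A(h))$ for $h\in\Uc$, whence $p_h\neq 0$, and $p_h$ stays a definite distance from $0$ (the Koenigs disc of $h$ at $0$ has radius bounded below on precompact sets).

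Finally, the contradiction. If $\rho_0\neq 1$: the set $S_{\rho_0}=\{(h,w)\in\Fc\times\C:\ h^{\circ n}(w)=w,\ (h^{\circ n})'(w)=\rho_0\}\subset\C^3$, being cut out by two equations, has dimension $\le 1$ unless some irreducible component $\Gamma$ of $\{h^{\circ n}(w)=w\}$ — a branched cover of $\Fc$ — carries a constant multiplier $(h^{\circ n})'(w)\equiv\rho_0$; but over any $h=f_{g_1,g_2}$ with $|g_1|<1$ and $|g_2|$ large, $h$ is hyperbolic and its finite periodic points are either $0$ (multiplier $g_1^{\,n}$, of modulus $<1$) or repelling, so the multiplier along $\Gamma$ cannot equal $\rho_0$ with $|\rho_0|=1$. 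Hence $\dim S_{\rho_0}\le 1$ and its projection to $\Fc$ cannot contain the open set $\Uc$, contradicting $\{(h,p_h):h\in\Uc\}\subset S_{\rho_0}$. If $\rho_0=1$: then $(h^{\circ n})'(p_h)=1$ makes $p_h$ a multiple root of $h^{\circ n}(w)-w$ for every $h\in\Uc$, so the polynomial $h\mapsto\mathrm{disc}_w(h^{\circ n}(w)-w)$ vanishes on the $\C^2$-open set $\Uc$, hence identically on $\Fc$ — impossible, since for $|g_1|<1$ and $|g_2|$ large $f_{g_1,g_2}^{\circ n}(w)-w$ has only simple roots. (One can also run these endgames using Theorem~\ref{t:prophd}, by pushing $g$ towards $\bd(\Wc_g)\subset\Pc_{g'(0)}$ when the slice component $\Wc_g$ is bounded, and the renormalization picture of large-$|b|$ maps when it is unbounded; in the case $\rho_0=1$ one additionally uses that the parabolic petals of $p_h$ must absorb $\om_2(h)$, so that $\Wc_g$ is of neither queer nor Siegel capture type, against Theorem~\ref{t:phd-info}.)

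The main obstacle is this last step. Propagating $p$ and forcing its multiplier to be constant are straightforward once the holomorphic motion of $X(f)$ is in place; the substance lies in turning the \emph{persistent} non-repelling cycle into a contradiction. This forces one to leave the slice $\Fc_\la$ for the full family $\Fc$ and the case $|g'(0)|<1$, to prevent $p_h$ from degenerating towards $0$ (handled by the uniform Koenigs disc) or towards $\infty$, and — in the algebraic route — to verify that no component of the variety of period-$n$ points carries a constant unimodular multiplier, which needs a supply of hyperbolic maps in $\Fc$ with the expected elementary periodic structure (e.g. those with $|g'(0)|<1$ and $|b|$ large). The parabolic case $\rho_0=1$ is the most delicate, since there $p_h$ is genuinely a multiple fixed point of $h^{\circ n}$ and the ``extra cycle'' is harder to exhibit.
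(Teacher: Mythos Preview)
Your argument is correct, and it rests on the same two ingredients as the paper's: the equivariant holomorphic motion propagates the neutral cycle to a periodic point $p_h\in X(h)$ for every $h$ in the component, and Lemma~\ref{l:Xf-J} forces $p_h\in J(h)$, which is incompatible with $p_h$ ever becoming attracting. You and the paper simply run these two ingredients in opposite orders.

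The paper stays entirely in the slice $\Fc_\lambda$ and argues in one line that the multiplier $h\mapsto (h^{\circ n})'(x(h))$ cannot be constant: the periodic point continues algebraically along an irreducible component of $\{h^{\circ n}(w)=w\}$ over $\Fc_\lambda$, and that component meets the region of disconnected Julia sets, where by the Fatou--Shishikura inequality every periodic point other than $0$ is repelling. Non-constancy plus open mapping then produces $h\in\Wc_f$ arbitrarily close to $f$ with $|(h^{\circ n})'(x(h))|<1$; but $x(h)\in X(h)\subset J(h)$, a contradiction. You instead use $X(h)\subset J(h)$ first (to get $|\rho|\ge 1$, hence $\rho$ constant by the maximum principle) and push the ``constant multiplier is impossible'' step to the end, where it becomes your dimension/discriminant case analysis over the full family $\Fc$. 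That endgame is sound, but it is exactly the Fatou--Shishikura obstruction the paper invokes, done by hand and in greater generality than needed: the same dimension count already works over the one-dimensional slice $\Fc_\lambda$ (the graph of $h\mapsto p_h$ is one-dimensional there, and disconnected-Julia maps in $\Fc_\lambda$ still exist for large $|b|$), so the extension of the holomorphic motion to $\Fc$, the passage to $|g'(0)|<1$, and the split into $\rho_0=1$ versus $\rho_0\ne 1$ are all avoidable. In short: your proof works, but the paper's ordering collapses your entire endgame into a single appeal to Fatou--Shishikura inside $\Fc_\lambda$.
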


\begin{proof}
Let $X(f)$ contain a periodic neutral point $x\ne 0$ of minimal
period $k$. Since the holomorphic motion $\ol\mu$ is equivariant,
$\ol\mu(x,h)=\ol\mu(f^{\circ r}(x), h)=h^{\circ r}(\ol\mu(x, h))$
for every $r$. This proves that $\ol\mu(x, h)=x(h)$ is a periodic
point of $h$ of period $k$, for every $h\in\Wc_f$.

The holomorphic function $h\mapsto (h^{\circ k})'(x(h))$ is
non-constant on the multiplier slice $\Fc_\lambda$. Indeed, the
slice $\Fc_\lambda$ contains polynomials with disconnected Julia
sets, and such polynomials cannot have non-repelling periodic points
different from $0$, by the Fatou--Shishikura inequality. It follows
that $x(h)$ is an attracting periodic point with respect to $h$, for
some polynomials $h$ in arbitrarily small neighborhood of $f$, a
contradiction to Lemma~\ref{l:Xf-2J}.
\end{proof}

Theorem A explicitly summarizes the results of this section.

\begin{thmA}
Suppose that $f\in \Wc_f$ where $\Wc_f$ is a potentially
renormalizable component of $\Fc_\la\sm \Pc_\la$. Then there is an
equivariant holomorphic motion $\ol\mu:X(f)\times\Wc_f\to\C$. The
set $X(f)$ is a forward invariant subset of $J(f)$. It contains no
neutral periodic points different from $0$. Every point of $X(f)$
has at least two preimages in $X(f)$ counting multiplicities.
\end{thmA}

Finally in this section we state Lemma~\ref{l:motiv}.

\begin{lem}\label{l:motiv}
Suppose that $f\in \Fc_{nr}$ is immediately renormalizable and $J^*(f)$
is its quadratic-like Julia set. Then $J^*(f)=X(f)$ and the principal
critical point $\om_1(f)$ belongs to $\thu(X(f))$.
\end{lem}

\begin{proof} Left to the reader.
\end{proof}

In the rest of the paper, we adopt the following approach. First we
establish several types of conditions on $X(f)$ and the holomorphic
motion $\ol\mu$ sufficient for $f$ being immediately renormalizable;
the set $X(f)$ plays here the role of a potential quadratic-like
Julia set. Then we verify that these conditions are fulfilled for
various cubic polynomials. In the end, this will lead to the proofs
of our results.

\section{Properties of polynomial-like maps}\label{s:poly-like}

In this section, we prove a criterion for a polynomial $P$ of any
degree to have a polynomial-like restriction.
Recall that, for any map $F$, by an
\emph{$F$-invariant} set, we mean a set $A$ such that $F(A)\subset
A$ but not necessarily $F(A)=A$.

We start with some purely topological considerations.
Consider a compactum $T\subset\C^*$ and a branch covering $P:\C^*\to\C^*$.
Let $\nu_T(z)$ be the number of all $P$-preimages of $z$ in $T$ counted
with multiplicities. Then there exists a neighborhood $V$ of $z$ and
$r$ pullbacks $W_1$, $\dots$, $W_r$ of $V$ each containing exactly one
point of the set $P^{-1}(z)\cap T$ and such that the sum of degrees of
$P$ restricted on $W_1$, $\dots$, $W_r$ is $\nu_T(z)$. If a point $x\in
P^{-1}(z)\cap T$ is not critical and belongs to $W_i$, the map
$P|_{W_i}$ is a homeomorphism onto image. If a point $x\in
P^{-1}(z)\cap T\cap W_i$ is critical, then it is the unique critical
point of $P$ in $W_i$.

Set $\widehat W=\cup^r_{i=1}W_i$. By compactness, the $P$-image of
$T\sm \widehat W$ is positively distant from $z$. Hence, for some
smaller neighborhood $V'\subset V$ of $z$, \emph{all} preimages of
any point $z'\in V'$ in $T\cap P^{-1}(V')$ belong to $\widehat W$
and the entire preimage of $V'$ in $T$ breaks down into $r$ pieces
contained in $W_1, \dots, W_r$. From now on let us call such $V'$ a
\emph{$(T-)$suitable} neighborhood of $z'$. Since \emph{any} point
$y\in V$ has exactly $\nu_T(z)$ preimages in $\widehat W$ (not
necessarily in $T$), the value of $\nu_T$ can only drop at points
$z'\in V'$, and $\nu_T$ is upper-semicontinuous.

Let $\nu_T|_{P(T)}$ be continuous at $z$. Choose a suitable
neighborhood $V$ of $z$, on which $\nu_T|_{P(T)}$ is constant.
Then, for every point $y\in V\cap P(T)$, the set $P^{-1}(y)\cap T$
of its preimages in $T$ consists exactly of \emph{all} its preimages
in $\widehat W$. Indeed, by the previous paragraph, if $y\in V$, then
it has $\nu_T(z)$ preimages in $\widehat W$. Together with the fact
that $\nu_T(y)=\nu_T(z)$ this implies our claim. Hence if $x\in
P(T)$ is a point of continuity of $\nu_T|_{P(T)}$, then $x$ has a
neighborhood $V$ such that in the corresponding open set $\widehat
W$ points from $T$ and not from $T$ \emph{cannot} have the same
image. If $\nu_T|_{P(T)}$ is continuous at all points, it follows
that there exists a neighborhood $U$ of $T$ such that for any $z\in
U\sm T$ we have $P(z)\notin P(T)$. It is equally easy to show that,
conversely, if $\nu_T|_{P(T)}$ is discontinuous at $z$, then there
is a preimage of $z$ in $T$, in whose arbitrarily small neighborhood
``collisions'' between a point from $T$ and a point not from $T$
take place. All this is summarized in Lemma~\ref{l:om2notin}.

\begin{lem}\label{l:om2notin}
Suppose that $T\subset\C^*$ is a compact set and $P:\C^*\to\C^*$ is a branch covering.
Then the following two properties are equivalent.
\begin{enumerate}
\item The function $\nu_T$ is continuous on $P(T)$.
\item There exists a neighborhood $U$ of $T$ such that for any
$z\in U\sm T$ we have $P(z)\notin P(T)$.
\end{enumerate}

If $T$ is connected, these conditions are equivalent to the
following:

\begin{enumerate}


\item [$(3)$] $T$ is a component of $P^{-1}(P(T))$.

\item [$(4)$] $\nu_T$ is a constant on $T$.

\end{enumerate}

\end{lem}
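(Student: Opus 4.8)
The plan is to establish the cycle of implications $(1)\Rightarrow(2)\Rightarrow(1)$ first, and then, under the connectedness hypothesis on $T$, to add $(3)$ to the loop via $(2)\Rightarrow(3)\Rightarrow(1)$ (or $(3)\Rightarrow(2)$). Most of the analytic content has in fact already been extracted in the paragraphs preceding the statement, so the write-up largely consists of assembling those observations cleanly.

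For $(1)\Rightarrow(2)$: assume $\nu_T$ is continuous on $P(T)$. The discussion above shows that each $z\in P(T)$ has a $T$-suitable neighborhood $V$ on which $\nu_T|_{P(T)}$ is constant; then $P^{-1}(y)\cap T$ for $y\in V\cap P(T)$ consists of \emph{all} preimages of $y$ in the associated set $\wh W$, so no point of $\wh W\sm T$ maps into $P(T)$ locally. Covering the compact set $T$ by finitely many such $\wh W$'s and shrinking, one produces a single neighborhood $U$ of $T$ with $P(U\sm T)\cap P(T)=\emptyset$; one should be slightly careful that the $\wh W$'s together with the requirement ``$U\subset\bigcup\wh W_j$'' genuinely cover a neighborhood of $T$, which follows because every point of $T$ lies in the interior of its own $\wh W$. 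For $(2)\Rightarrow(1)$: if $\nu_T$ were discontinuous at some $z\in P(T)$, then by upper-semicontinuity $\nu_T$ drops at nearby points $z'\in P(T)$; tracing this through the local picture, near some preimage $x\in P^{-1}(z)\cap T$ one finds points of $T$ colliding with points not in $T$ under $P$ — i.e., points arbitrarily close to $x$ lying outside $T$ whose image lies in $P(T)$ — contradicting $(2)$. This is the step that needs the most care, since ``collision'' has to be turned into an explicit sequence $w_n\to x$ with $w_n\notin T$ and $P(w_n)\in P(T)$; the mechanism is that the missing preimage mass at $z'$ must be supplied by points of $\wh W\sm T$.

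Now suppose $T$ is connected. For $(2)\Rightarrow(3)$: $T\subset P^{-1}(P(T))$ is automatic; if $T$ were a proper subset of its connected component $C$ in $P^{-1}(P(T))$, pick $w\in C\sm T$ and a point $t\in T\cap\ol{C\sm T}$ on the ``boundary between'' $T$ and $C\sm T$ inside $C$ (here connectedness of $C$ is used: $T$ is closed in $C$, so if $T\ne C$ then $T$ is not open in $C$, giving such a $t$). Every neighborhood of $t$ then meets $C\sm T$, and points of $C\sm T$ satisfy $P(\cdot)\in P(T)$ — directly contradicting $(2)$, which forbids points of $U\sm T$ from mapping into $P(T)$. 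For $(3)\Rightarrow(1)$ (equivalently $(3)\Rightarrow(2)$): if $T$ is a whole component of $P^{-1}(P(T))$, then $T$ is both open and closed in $P^{-1}(P(T))$, so it has a neighborhood $U$ in $\C$ disjoint from $P^{-1}(P(T))\sm T$; that is exactly property $(2)$.

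I expect the main obstacle to be the careful bookkeeping in $(2)\Rightarrow(1)$ — making the informal ``collision'' argument precise — together with checking in $(1)\Rightarrow(2)$ that finitely many local neighborhoods $\wh W$ genuinely assemble into one honest neighborhood of $T$ on which the conclusion holds simultaneously (one may need to intersect with a neighborhood on which $\nu_T$ attains its values as described, and to use compactness of $T$ twice — once to extract a finite subcover, once to keep $P(T\sm\wh W)$ uniformly bounded away from the relevant values). Everything in the connected case is soft point-set topology once the equivalence of $(1)$ and $(2)$ is in hand.
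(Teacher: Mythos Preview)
Your proposal is correct and follows essentially the same route as the paper: the equivalence of (1) and (2) is declared there to follow from the discussion preceding the lemma, (2)$\Rightarrow$(3) is called ``clear'', and (3)$\Rightarrow$(2) is obtained by taking a neighborhood of $T$ disjoint from the other components of $P^{-1}(P(T))$. Your write-up is in fact more detailed than the paper's; the only point to watch is that in your (3)$\Rightarrow$(2) step a component of a compact set is not automatically open---it holds here because $P^{-1}(P(T))$ has only finitely many components (as $P$ is a polynomial and $P(T)$ is a continuum), and the paper glosses over this as well.
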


\begin{proof}
By the arguments right before Lemma~\ref{l:om2notin}, claims (1) and (2) are
equivalent. Assume now that $T$ is connected. Then (1) and (4) are
equivalent, and (2) implies (3). Suppose that (3) holds. Then there is
a neighborhood $U$ of $T$ that does not intersect other components of
$P^{-1}(P(T))$. It follows for every $z\in U\sm T$ we have $P(z)\notin
P(T)$.
\end{proof}

There is a useful sufficient condition for (3) in the polynomial case.

\begin{lem}\label{l:suffnotin}
Suppose that $T$ is a continuum and $P$ is a polynomial. Set
$m=1+\sum (d_c-1)$, where the sum is taken over all critical points
$c$ of $P$ in $T$, and $d_c$ is the multiplicity of the point $c$.
If $(1)$ $P(c)\notin \thu(P(T))$ for any critical point $c\notin
\thu(T)$, $(2)$ there are no critical points in $\thu(T)\sm T$, and
$(3)$ for each point $x\in P(T)$ we have $\nu_T(x)\ge m$, then $T$
is a component of the set $P^{-1}(P(T))$ and $\thu(T)$ is a
component of the set $P^{-1}(P(\thu(T)))$ (in particular,
$\thu(P(T))=P(\thu(T))$).
\end{lem}

\begin{proof}
Take all critical points of $P$ not belonging to $\thu(T)$, connect
their $P$-images to infinity with pairwise disjoint simple curves
(``rays'') avoiding $\thu(P(T))$, and pull them back to their
critical points to construct a finite collection of cuts of the
plane. Let $W$ be a complementary component of this collections of
cuts containing $T$. Clearly, $P:W\to P(W)$ is a branched covering
map. By the Riemann-Hurwitz formula, the topological degree of this
map is $m$ (observe that by the construction and by the assumptions
and there are no critical points in $W\sm T$). Thus points of $P(T)$
can have at most $m$ preimages in $T$. By the assumptions, this
implies that they have exactly $m$ preimages in $T$ counting
multiplicities.

Let us show that $T$ is a component of the set $P^{-1}(P(T))$.
Indeed, by the previous paragraph condition (4) from
Lemma~\ref{l:om2notin} is fulfilled; hence, Lemma~\ref{l:om2notin}
implies the desired. Moreover, $T$ is the unique component of the
set $P^{-1}(P(T))$ in $W$ (recall that the topological degree of
$P|_W$ is $m$). Let us show that $P(\thu(T))\subset \thu(P(T))$.
Indeed, suppose otherwise. Then there exists a component $V$ of
$\thu(T)\sm T$ and a point $x\in V$ such that $P(x)\notin
P(\thu(T))$. Connect $P(x)$ to infinity with a simple curve
(``ray'') avoiding $\thu(P(T))$ and pull it back to $x$. This gives
rise to a ``ray'' connecting $x$ to infinity and avoiding $T$, a
contradiction. Thus, $P(\thu(T))\subset \thu(P(T))$.

Now, suppose that a point $y\in \thu(P(T))$ has a preimage $z\in
W\sm \thu(T)$. By the above, this can only happen if $y\in
\thu(P(T))\sm P(T)$. Let $U$ be the component of $\thu(P(T))\sm
P(T)$ such that $y\in U$. Consider a component $Q$ of $P^{-1}(U)$
such that $z\in Q$. Since $z\notin \thu(T)$, it follows that there
are points of $\bd(Q)\subset W$ which do not belong to $T$ while
their images belong to $\bd(U)\subset T$. This contradicts the fact
that $T$ is the unique component of the set $P^{-1}(P(T))$ in $W$
and completes the proof of the lemma).
\end{proof}

Theorem B is the main result of this section. Recall that a
\emph{parabolic domain} at a periodic parabolic point $y$ is a periodic
Fatou component that contains $y$ in its boundary and whose points converge to $y$ under the iterates of the given polynomial.

\begin{thmB}
  Let $P:\C\to\C$ be a polynomial, and $Y\subset\C$ be a full
  $P$-invariant continuum. The following assertions are equivalent:
\begin{enumerate}
 \item the set $Y$ is the filled Julia set of some polynomial-like
     map $P:U^*\to V^*$ of degree $k$,
 \item $Y$ is a component of the set $P^{-1}(P(Y))$, and, for every
     attracting or parabolic point $y$ of $P$ in $Y$, the
     immediate attracting basin of $y$ or the union of all parabolic domains
     at $y$ is a subset of $Y$.
\end{enumerate}
\end{thmB}

The proof uses some ideas communicated by M. Lyubich to the fourth
named author.

\begin{proof}
It suffices to prove $(2)\Longrightarrow (1)$. Let
$\phi:\disk\to\C^*\sm Y$ be a Riemann map. By
Lemma~\ref{l:om2notin}, a point $x\not\in Y$ close to $Y$ cannot map
into $Y$. Hence we can choose $\eps>0$ so that the map
$F=\phi^{-1}\circ P\circ \phi$ is defined and holomorphic on the
annulus $A_\eps=\{z: 1-\eps<|z|<1\}$. Moreover, the map $F$ extends
continuously to the unit circle $\{|z|=1\}$. Indeed, the map $\phi$
induces a homeomorphism $\widehat\phi$ between the set of prime ends
of $\C^*\sm Y$ and the unit circle. Note that $P$ induces a
continuous map $\widehat P$ on the prime ends of $\C^*\setminus Y$.
The continuous extension of $F$ is obtained by conjugating the map
$\widehat P$ by the homeomorphism $\widehat\phi$.

By the Schwarz reflection principle, we can extend the map $F$ to a
holomorphic map of the annulus $1-\eps<|z|<(1-\eps)^{-1}$ to $\C$
preserving $\uc$ (hence taking this annulus to another annulus
around $\uc$). By a theorem of Ma\~n\'e \cite{Ma}, if $F$ has no
attracting or parabolic periodic points on $\uc$, and no critical
points on $\uc$, then $F$ is expanding, i.e., $|(F^{\circ
n})'(z)|\ge C\mu^n$ for some $C>0$ and $\mu>1$.

Since $F$ takes $A_\eps$ to a subset of the disk $|z|<1$, it has no
critical points on $\uc$. Suppose that $F$ has an attracting or a
parabolic periodic point $z$ of period $r$ on $\uc$. In both cases,
there is a convex Jordan domain $\widetilde E$ such that $F^{\circ
r}(\widetilde E)\subset\widetilde E$, the closure of $\widetilde E$
contains $z$, and all points of $\widetilde E$ converge to $z$ under
the iterations of $F^{\circ r}$. Since the unit circle is invariant
under $F$, by the local theory of parabolic points, we can arrange
that $\widetilde E\cap\disk\ne\0$. Note that $\widetilde E$ and the
unit disk intersect over a convex Jordan domain $E$. By definition,
$F^{\circ r}(E)\subset E$, and all points in $E$ converge to $z$
under the iterations of $F^{\circ r}$.

Set $B=\phi(E)$. Then $P^{\circ r}(B)\subset B$. By the
Denjoy--Wolff theorem, all points of $B$ converge under the
iterations of $P^{\circ r}$ to a $P^{\circ r}$-fixed point $x\in
\bd(B)$. Clearly, $x\in Y$, and $x$ is either attracting or
parabolic (as it attracts an open set of points). However, by the
assumptions, the attracting basin of $x$ or the union of all
parabolic domains at $x$ is a subset of $Y$, a contradiction.


Thus, $F$ expands on $\uc$, and $\eps$ can be chosen so that the
$F$-pullback of $A_\eps$ is compactly contained in $A_\eps$. Let
$V^*$ be the Jordan domain bounded by the $\phi$-image of the curve
$|z|=1-\eps$. Set $U^*$ to be the component of $P^{-1}(V^*)$
containing $Y$. Then $\ol{U^*}\subset V^*$, and $P:U^*\to V^*$ is a
polynomial-like map. The fact that $Y$ is the filled Julia
set of this polynomial-like map, follows easily.
\end{proof}

\section{Proof of Theorem C}\label{s:exteclosu}


First we study the unbounded potentially renormalizable component.

\begin{cor}
  \label{c:unbound}
  The unbounded potentially renormalizable component
  consists of immediately renormalizable maps.
\end{cor}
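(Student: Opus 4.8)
The plan is to exhibit a single immediately renormalizable map in the unbounded potentially renormalizable component and then invoke Theorem~\ref{t:eqv} to spread this property to the whole component. First I would argue that the unbounded component of $\Fc_\la\sm\Pc_\la$ really does consist of potentially renormalizable maps, so that it is indeed a potentially renormalizable component in the sense of Section~\ref{s:notinphd}: a map $f\in\Fc_\la\sm\Pc_\la$ has $0$ non-repelling (since $|\la|\le 1$) and $[f]\notin\ol\phd_3$, which is exactly the definition of potential renormalizability. Hence the unbounded component of $\Fc_\la\sm\Pc_\la$ is a potentially renormalizable component, call it $\Wc$.

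Next I would produce an explicit immediately renormalizable representative $f_{\la,b}\in\Wc$ by taking $|b|$ very large. For $|b|$ large, the critical points of $f_{\la,b}(z)=\la z+bz^2+z^3$ are located near $0$ and near $-\tfrac{2b}{3}$ (to leading order), so one critical point stays in a small neighborhood of $0$ while the other escapes to infinity; thus the Julia set is disconnected and $[f_{\la,b}]\notin\Mc_3\supset\ol\phd_3$, placing $f_{\la,b}$ in $\Fc_\la\sm\Pc_\la$, and since ``$|b|$ large'' is a connected-at-infinity condition it lies in the unbounded component $\Wc$. Moreover, near $0$ the map $f_{\la,b}$ looks like the quadratic map $z\mapsto \la z + bz^2$ on an appropriate small Jordan domain $U^*\ni 0$: one checks that there are Jordan domains $U^*\ni 0$ and $V^*$ with $f_{\la,b}:U^*\to V^*$ proper of degree two and $\ol{U^*}\subset V^*$ — the escaping critical point is far away and does not interfere — so $f_{\la,b}$ is immediately renormalizable. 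This is the step I expect to require the most care: one must produce the quadratic-like restriction honestly, verifying that the second (escaping) critical point and its forward orbit stay outside $\ol{V^*}$ and that $\ol{U^*}\Subset V^*$; the cleanest route is probably to use the B\"ottcher coordinate at infinity to cut the plane away from the escaping critical orbit and then apply Corollary~\ref{c:poly-like} (or the classical Douady--Hubbard quadratic-like construction) to the invariant continuum consisting of the non-escaping part of $K(f_{\la,b})$.

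Finally, since $\Wc$ is a potentially renormalizable component containing the immediately renormalizable map $f_{\la,b}$, Theorem~\ref{t:eqv} (equivalence of (1) and (3)) forces every map in $\Wc$ to be immediately renormalizable. As $\la$ with $|\la|\le 1$ was arbitrary, this proves the corollary. (Alternatively, once one knows that the set of immediately renormalizable maps is open and, by Theorem~\ref{t:eqv}, that immediate renormalizability is constant on each potentially renormalizable component, one need only observe that the large-$|b|$ tail of $\Fc_\la$ is contained in a single component, namely the unbounded one.)
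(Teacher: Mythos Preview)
Your approach is essentially the same as the paper's: pick a map with large $|b|$ (hence disconnected Julia set) in the unbounded component, show it is immediately renormalizable, and then invoke Theorem~\ref{t:eqv} to propagate this to the whole component. The only difference is that the paper dispatches the middle step in one line by citing Branner--Hubbard \cite[Theorem~5.3]{BrHu2}, which gives exactly the quadratic-like restriction for a cubic with one escaping critical point, whereas you propose to build it by hand via equipotentials or Corollary~\ref{c:poly-like}; both routes work, but the citation is cleaner and avoids the care you correctly flag as needed.
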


\begin{proof}
  Let $\Wc_\infty$ be the unbounded potentially renormalizable component in $\Fc_\lambda$.
  Then there is a polynomial $f\in\Wc_\infty$, whose Julia set is disconnected.
  Such a polynomial is necessarily immediately renormalizable, by \cite[Theorem 5.3]{BrHu2}.
  By Theorem A, it follows that for any $g\in \Wc_\infty$ the
  restriction $g|_{X(g)}$ is quasi-symmetrically conjugate to
  $f|_{X(f)}$ and therefore, in fact, to a quadratic polynomial from the
  Main Cardioid restricted on its Julia set. Thus, condition (1) of
  Theorem B is satisfied. Moreover, it follows that
  $\thu(X(g))$ does not contain attracting periodic points except for,
  possibly, $0$. Finally, by Theorem A, it follows that
  $X(g)$ contains no parabolic periodic points except, possibly, for
  $0$. Thus, condition (2) of Theorem B is
  satisfied too. Applying Theorem B to $g$, we see that
  $g|_{X(g)}$ is quadratic-like and $g$ is immediately renormalizable
  as desired.
\end{proof}

The following useful lemma was suggested to us by the referee which
we gratefully acknowledge here.

\begin{lem}\label{l:useful}
Let $\Vc\subset \Fc_\la$ be a stable component. Then no map from
$\Vc$ can have a parabolic periodic point other than, possibly, $0$.
\end{lem}

\begin{proof}
Suppose that $g\in \Vc$ has a parabolic periodic point $\zeta(g)\ne
0$. Then $g$ is isolated in the set of parameters in $\Fc_\la$ such
that for the associated maps the point corresponding to $\zeta(g)$
is parabolic. Thus, in a small punctured neighborhood of $g$ in
$\Fc_\la$ the multiplier of the associated map at the point
corresponding to $\zeta(g)$, has absolute value greater than $1$. By
the Maximum Principle this implies that $\zeta(g)$ cannot be
parabolic.
\end{proof}

Let us now study bounded potentially renormalizable components. We
need the notion of an active critical point introduced by McMullen
in \cite{Mc00}. Set $i=1$ or $2$, and take $f\in\Fc_\lambda$. The
critical point $\om_i(f)$ is \emph{active} if, for every
neighborhood $\Uc$ of $f$ in $\Fc_\lambda$, the sequence of the
mappings $g\mapsto g^{\circ n}(\om_i(g))$ fails to be normal in
$\Uc$. If the critical point $\om_i(f)$ is not active, then it is
said to be \emph{passive}.


\begin{cor}
\label{c:bdd-stab} Let $|\lambda|\le 1$. Every bounded potentially
renormalizable component $\Wc$ in $\Fc_\lambda$ consists of stable
maps. If $\Wc$ contains an immediately renormalizable map, then it
coincides with a stable component. The union $\Tc_\la$ of $\Pc_\la$
with all domains of stability, whose boundaries are contained in
$\Pc_\la$, equals $\thu(\Pc_\la)$.
\end{cor}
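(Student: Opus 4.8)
The plan is to deduce everything from the dichotomy already established: by Theorem~\ref{t:eqv} a potentially renormalizable component is either entirely immediately renormalizable or entirely not, and in the immediately renormalizable case the set $X(f)$ is a quadratic-like Julia set whose holomorphic motion over $\Wc$ witnesses stability. First I would treat a bounded potentially renormalizable component $\Wc$. I would show $\om_1(f)$ is passive on $\Wc$: by Proposition~\ref{p:Lambda} and Theorem~\ref{t:holmot} the principal critical point and its iterates move holomorphically over $\Wc_f=\Wc$, so the family $g\mapsto g^{\circ n}(\om_1(g))$ stays in the (bounded, since $J(g)$ stays near $X(g)$) moving set and is normal. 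For $\om_2(f)$: if $\om_2$ were active somewhere in $\Wc$, then by the standard Montel/transversality argument (active critical points are limits of parameters where the critical orbit does special things) one finds nearby $g\in\Ac$ with $\om_2(g)\in A(g)$; combined with $\om_1(g)\in A(g)$ this gives $[g]\in\phd_3$, contradicting potential renormalizability of the whole component. Hence both critical points are passive throughout $\Wc$, so $\Wc$ consists of $J$-stable maps (passivity of all critical points is equivalent to $J$-stability in a holomorphic family), i.e.\ $\Wc$ lies inside a single stable component.

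Next I would show that if $\Wc$ contains an immediately renormalizable map $f$, then $\Wc$ is itself a stable component. One inclusion ($\Wc\subset\Vc_f$) is the previous paragraph. For the reverse, I would use the last clause of Theorem~\ref{t:eqv}: since $f$ is stable and immediately renormalizable, \emph{every} $g\in\Vc_f$ satisfies $[g]\notin\ol\phd_3$ and is immediately renormalizable, hence potentially renormalizable; as $\Vc_f$ is connected it lies in one potentially renormalizable component, which must be $\Wc$. Therefore $\Wc=\Vc_f$. (If instead $\Wc$ contains no immediately renormalizable map, by the dichotomy none of its maps are immediately renormalizable; such a $\Wc$ is still stable by the first paragraph, so in all cases bounded potentially renormalizable components are stable.)

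Finally, the identity $\Tc_\la=\thu(\Pc_\la)$. Recall $\Tc_\la$ is the union of $\Pc_\la$ with all stable components whose boundary lies in $\Pc_\la$. A bounded component $\Uc$ of $\Fc_\la\sm\Pc_\la$ has $\bd(\Uc)\subset\Pc_\la$ and, being a bounded component of the complement of $\Pc_\la$, is exactly a component of $\thu(\Pc_\la)\sm\Pc_\la$; such $\Uc$ consists of potentially renormalizable maps (they are in $\Fc_\la\sm\Pc_\la$), so $\Uc$ lies in a bounded potentially renormalizable component, which by the above is a stable component, whence $\Uc\subset\Tc_\la$; this gives $\thu(\Pc_\la)\subset\Tc_\la$. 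Conversely, any stable component $\Vc$ with $\bd(\Vc)\subset\Pc_\la\subset\thu(\Pc_\la)$ cannot meet $\C\sm\thu(\Pc_\la)$ (it would have to cross $\bd(\thu(\Pc_\la))\subset\Pc_\la$, impossible inside $\Fc_\la\sm\Pc_\la$ unless $\Vc$ meets $\Pc_\la$, but $\Vc\subset\Fc_\la^{st}$ and no map in $\Pc_\la$ need be excluded — here one uses that $\Vc$ is connected, disjoint from $\bd(\thu(\Pc_\la))$ in its interior part, and has boundary in $\Pc_\la$), so $\Vc\subset\thu(\Pc_\la)$; together with $\Pc_\la\subset\thu(\Pc_\la)$ this yields $\Tc_\la\subset\thu(\Pc_\la)$.

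The main obstacle I anticipate is the middle paragraph: ruling out activity of $\om_2$ on a bounded potentially renormalizable component. One must ensure the perturbed maps exhibiting activity can be taken in $\Ac$ (i.e.\ with $|g'(0)|<1$) and with $\om_2(g)\in A(g)$ rather than merely having a recurrent or otherwise special second critical orbit; this is where the argument is delicate and where the hypothesis $|\la|\le 1$ together with the definition of potential renormalizability (no nearby $g\in\Ac$ with $[g]\in\phd_3$) must be used carefully, in the spirit of the proofs of Theorem~\ref{t:princ} and Lemma~\ref{l:2pts}.
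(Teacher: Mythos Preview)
Your passivity arguments are where the proposal diverges from the paper, and this is also where the gaps lie. The paper does \emph{not} argue passivity via the holomorphic motion of $X(f)$ or via an activity/transversality contradiction for $\om_2$. Instead it uses a one-line maximum-principle argument that handles both critical points simultaneously: since $\Wc$ is a bounded component of $\Fc_\la\sm\Pc_\la$, its boundary lies in $\Pc_\la$; for $g\in\bd(\Wc)\subset\Pc_\la$ the Julia set is connected, so both critical orbits are bounded, uniformly in $g$ by compactness of $\bd(\Wc)$. The holomorphic functions $g\mapsto g^{\circ n}(\om_i(g))$ are therefore uniformly bounded on $\bd(\Wc)$, hence on $\ol\Wc$ by the maximum principle, hence normal on $\Wc$.

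Your argument for $\om_1$ is not quite right as stated: Theorem~\ref{t:holmot} gives a holomorphic motion of $Z(f)$, which consists of \emph{preimages} of $\om_1(f)$, not its forward orbit; there is no reason the forward iterates $g^{\circ n}(\om_1(g))$ lie in the moving set $\ol{Z(g)}$ or $X(g)$. And the obstacle you flag for $\om_2$ is real: activity of $\om_2$ at some $f\in\Wc$ would give, by Montel, parameters $g$ near $f$ where the orbit of $\om_2(g)$ hits a prescribed value, but turning this into ``$\om_2(g)\in A(g)$ for some $g\in\Ac$ near $f$'' requires an extra step you have not supplied (and which is awkward when $|\la|=1$, since then $\Fc_\la\cap\Ac=\emptyset$ and the perturbation into $\Ac$ leaves the slice). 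The maximum-principle route sidesteps all of this.

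The remainder of your plan---using the last clause of Theorem~\ref{t:eqv} to get $\Vc_f\subset\Wc$, and the set-theoretic identification of $\Tc_\la$ with $\thu(\Pc_\la)$---matches the paper's reasoning, though the paper phrases the second claim contrapositively (a strictly larger stable component $\Vc\supsetneqq\Wc$ would meet $\Pc_\la$, but Theorem~\ref{t:eqv} forces any such $g\in\Vc\cap\Pc_\la$ to be immediately renormalizable, contradicting $[g]\in\ol\phd_3$).
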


\begin{proof}
By \cite{MSS}, to prove that $f\in\Wc$ is stable, it suffices to show
that both critical points of $f$ are passive. Note that, if
$g\in\bd(\Wc)$, then the $g$-orbits of $\om_1(g)$ and of $\om_2(g)$ are
bounded uniformly with respect to $g$. By the maximum principle, the
$f$-orbits of $\om_1(f)$ and $\om_2(f)$ are uniformly bounded for all
$f\in\Wc$, which implies normality. Thus both critical points are
passive, and the first claim of the corollary is proved.

Consider now a stable component $\Vc$ containing an immediately
renormalizable polynomial $f$. We claim that then no polynomial
$g\in \Pc_\la\cap \Vc$ exists. Indeed, suppose otherwise. The fact
that $f$ is immediately renormalizable implies by
Lemma~\ref{l:motiv} that $X(f)=J^*$ is its quadratic-like Julia set.
Since $\Vc$ is stable, then $g|_{J(g)}$ and $f|_{J(f)}$ are
conjugate. Let $Y$ be the continuum corresponding to $X(f)$ under
this conjugacy. We want to apply to $Y$ Theorem B. Indeed, by
construction of $Y$ and properties of $X(f)$ it follows that
$g(Y)=Y$ and $Y$ is a component of the set $f^{-1}(Y)$.

Let us show that $g$ does not have any attracting periodic orbits
other than possibly $0$. Indeed, suppose otherwise. Then, since any
attracting periodic orbit persists under small perturbations, it
follows that arbitrarily close to $g$ there are polynomials $h$ with
$[h]\in \ol{\phd_3}$ and an attracting periodic orbit distinct from
$\{0\}$, a contradiction. Moreover, by Lemma~\ref{l:useful} $g$
cannot have a parabolic periodic point not equal to $0$.

Thus, Theorem B applies to $Y$ and implies that $Y$ is a
quadratic-like Julia set. Clearly, this contradicts the assumption
that $g\in \Pc_\la$.  Thus, if $\Vc$ is a stable component
containing an immediately renormalizable polynomial then no
polynomial $g\in \Pc_\la\cap \Vc$ exists. Since by
Corollary~\ref{c:unbound} all polynomials in $\C\sm \thu(\Pc_\la)$
are immediately renormalizable, it follows that stable components
are either contained in $\Pc_\la$, or contained in $\C\sm \Pc_\la$.
In particular, the union $\Tc_\la$ of $\Pc_\la$ with all domains of
stability, whose boundaries are contained in $\Pc_\la$, equals
$\thu(\Pc_\la)$ (recall that by the above every bounded potentially
renormalizable component $\Wc$ in $\Fc_\lambda$ consists of stable
maps). 
\end{proof}

We do \emph{not} prove that all components of $\thu(\Pc_\la)\sm
\Pc_\la$ are stable components. This we can guarantee only for the
components containing an immediately renormalizable map. Otherwise
it might happen that a component $\Wc$ of $\thu(\Pc_\la)\sm \Pc_\la$
is a proper subset of a stable component $\Vc$. However, we do prove
that every stable component is contained in $\thu(\Pc_\la)$.
Clearly, Corollary~\ref{c:bdd-stab} together with
Theorem~\ref{t:extend} implies Lemma~\ref{l:extend-th-i}.

\begin{cor}\label{c:boring}
Let $f\in \Uc$ where $\Uc$ is a bounded component of
$\thu(\Pc_\la)\sm\Pc_\la$. Then $f$ does not have non-repelling
periodic points except, possibly, for $0$. Moreover, $f$ does not
have repelling periodic cutpoints in $J(f)$.
\end{cor}

\begin{proof}
By Lemma~\ref{l:extend-th-i}, $[f]\in \cu$. Hence $f$ does not have
repelling periodic cutpoints in $J(f)$. Suppose that $f$ has a
non-repelling periodic point $y\ne 0$. Since $[f]\in \cu$, then the
only possibility for $y$ is that $y$ is parabolic. However this
contradicts the fact that $f$ belongs to a stable component (which
follows from Corollary~\ref{c:bdd-stab}) and Lemma~\ref{l:useful}.
\end{proof}

We are ready to discern in which bounded potentially renormalizable
components maps are immediately renormalizable.

\begin{dfn}\label{d:ql-capt}
Assume that $f$ is immediately renormalizable with filled
quadratic-like Julia set $K^*$. If there exists the smallest $n$ such
that $f^{\circ n}(\om_2(f))\in K^*$, then we say that $f$ is an
immediately renormalizable polynomial \emph{of capture type}. Denote
$n$ above by $n_f$; denote by $Q_f$ the pullback of $K^*$ containing
$\om_2(f)$.
\end{dfn}

Observe that $n_f>0$ by the definition of a quadratic-like map. Also,
pullbacks of $K^*$ either coincide with $K^*$ or are disjoint from
$K^*$. In particular, $Q_f$ is disjoint from $K^*$ by the definition of
a pullback. All pullbacks of $K^*$ form a family of pairwise disjoint
subcontinua of $K(f)$.

\begin{lem}\label{l:n>1}
Let $f$ be an immediately renormalizable polynomial of capture type.
Then $n_f>1$.
\end{lem}

\begin{proof}
If $n_f=1$, then $f(\om_2(f))\in K^*$ will have \emph{two} preimages in
$K^*$ and \emph{two} more preimages in $Q_f$, a contradiction (recall
that $f$ is cubic and that we count preimages with multiplicities).
\end{proof}

Lemma~\ref{l:no-ql-capt} is used in the proof of Theorem C.

\begin{lem}\label{l:no-ql-capt}
An immediately renormalizable polynomial of capture type cannot
belong to a bounded potentially renormalizable component.
\end{lem}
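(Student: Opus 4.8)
The plan is to argue by contradiction and reduce everything to the observation, recorded in the Introduction, that an immediately renormalizable polynomial in $\Fc_{nr}$ has its class outside $\ol\phd_3$. Suppose $f$ is immediately renormalizable of capture type and lies in a bounded potentially renormalizable component $\Wc\subset\Fc_\la$ (so $|\la|\le 1$). Since every map in $\Fc_\la$ then belongs to $\Fc_{nr}$, no boundary map of $\Wc$ is potentially renormalizable, so $\bd(\Wc)\subset\Pc_\la$. By Corollary~\ref{c:bdd-stab}, $\Wc$ is a stable component; by Theorem~\ref{t:eqv}, every $g\in\Wc$ is immediately renormalizable, with $X(g)=J(g^*)$ a connected continuum satisfying $\nu_{X(g)}|_{X(g)}=2$ and with filled quadratic-like Julia set $K^*(g)=\thu(X(g))$, and by Theorem~\ref{t:olmu} the sets $X(g)$, hence also $K^*(g)$, move holomorphically over $\Wc$. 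As $\Wc$ is a stable component, the combinatorial position of $\om_2(g)$ with respect to $K^*(g)$ is $J$-stable, so every $g\in\Wc$ is again of capture type with one and the same depth $n>1$, with the satellite pullback $Q(g)\ni\om_2(g)$ of $K^*(g)$ a continuum disjoint from $K^*(g)$, moving holomorphically over $\Wc$.

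I would then pass to the boundary. Fix $g_0\in\bd(\Wc)$, so $[g_0]\in\ol\phd_3$. First, $g_0$ is \emph{not} immediately renormalizable: otherwise, since $g_0\in\Fc_{nr}$, the Introduction gives $[g_0]\notin\ol\phd_3$, hence $g_0$ is potentially renormalizable; but being immediately renormalizable is an open condition (the same domains $U^*\subset V^*\ni 0$ work for nearby maps), so the potentially renormalizable component of $g_0$ is open, meets $\Wc$ (because $g_0\in\bd(\Wc)$), therefore equals $\Wc$, forcing $g_0\in\Wc$ --- impossible since $\Wc$ is open.

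The contradiction will come from showing that $g_0$ \emph{is} immediately renormalizable after all. Take $g_k\in\Wc$ with $g_k\to g_0$ and let $X_0$ be a Hausdorff limit of the continua $X(g_k)$ along a subsequence; connectedness and non-separation survive, so $X_0$ is a non-separating continuum, and I would check $\nu_{X_0}|_{X_0}\ge 2$ by following the two $X(g_k)$-preimages (with multiplicity, allowing collision into $\om_1(g_0)$) into the limit. Since $n>1$ is maintained, $g_0(\om_2(g_0))\notin\thu(X_0)$, so hypothesis~(1) of Corollary~\ref{c:poly-like} holds for $P=g_0$ and $Y=\thu(X_0)$; hypothesis~(2) is the inequality $\nu_{\thu(X_0)}|_{\thu(X_0)}\ge 2$ just obtained; and hypothesis~(3) follows from Theorem~\ref{t:prophd} --- the only non-repelling periodic point of $g_0$ that may fail to have multiplier $1$ is $0$, and the multiplier-$1$ alternative is excluded because the hybrid class $c(g)\in\ol\phd_2$ of $g^*$ is locally constant along $\Wc$ (the multiplier of $0$ is the fixed $\la$), so $z^2+c$ has no non-repelling periodic point besides its distinguished fixed point --- together with Corollary~\ref{c:1crit}, which places the attracting or parabolic domain of $g_0$ at $0$ inside $\thu(X_0)$. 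Then Corollary~\ref{c:poly-like} gives that $\thu(X_0)$ is the filled Julia set of a quadratic-like restriction of $g_0$ with $0$ in its interior; that is, $g_0$ is immediately renormalizable, contradicting the previous paragraph.

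The step I expect to be the main obstacle is the limiting analysis at $g_0$: one must show that neither $X(g_k)$ pinches nor the satellite pullback $Q(g_k)$ touches $K^*(g_k)$ as $g_k\to g_0$, since otherwise $X_0$ need not be a quadratic-like filled Julia set and $\nu_{X_0}|_{X_0}\ge 2$, resp.\ hypothesis~(1), can fail. This is exactly where both hypotheses of the lemma are used: ``capture type'' pins down the combinatorial depth $n>1$ of the orbit of $\om_2$, while ``boundedness'' of $\Wc$ forces $\bd(\Wc)\subset\Pc_\la$, hence $[g_0]\in\cu$, and it is the cubioidal structure of $g_0$ --- in particular the absence of repelling periodic cutpoints in $J(g_0)$ --- that rules out the degenerations (an extra preimage inside $\thu(X_0)$, or a collision of $Q(g_k)$ with $K^*(g_k)$) which would otherwise be forced on the boundary.
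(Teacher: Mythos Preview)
Your approach is genuinely different from the paper's and, as you yourself flag, the limiting analysis at $g_0\in\bd(\Wc)$ is not carried through; in fact several steps break.

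First, Corollary~\ref{c:1crit} is stated only for \emph{potentially renormalizable} $f$, and you apply it to $g_0\in\Pc_\la$, which by definition is not potentially renormalizable. So you have no access to the statement that the parabolic or attracting domain at $0$ for $g_0$ sits inside $\thu(X_0)$. Second, your treatment of hypothesis~(3) of Corollary~\ref{c:poly-like} is circular: to exclude multiplier-$1$ parabolic points in $X_0$ you invoke the hybrid class of $g^*$ for $g\in\Wc$, but that says nothing about periodic points of $g_0$, and Theorem~\ref{t:prophd} in fact \emph{allows} $g_0$ to have multiplier-$1$ parabolic cycles. Third, and most seriously, the claim that $g_0(\om_2(g_0))\notin\thu(X_0)$ ``since $n>1$ is maintained'' is exactly what can fail at the boundary: Hausdorff limits of the disjoint continua $K^*(g_k)$ and $Q(g_k)$ may merge, and nothing you have written prevents this. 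Your final paragraph appeals to ``the absence of repelling periodic cutpoints in $J(g_0)$'' to rule out such collisions, but that is a heuristic, not an argument; no mechanism is given linking cutpoints to the collision of $Q$ with $K^*$ or to extra preimages appearing in $X_0$.

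The paper avoids all of this by working \emph{inside a single map $f\in\Wc$} rather than passing to the boundary. It uses two external rays landing on the pullback $Q\ni\om_2(f)$ to cut the plane into regions $L\supset K^*$ and $T$, argues by a degree count on angles that $f^{\circ(n-1)}(Q)\subset T$, and then applies the fixed-point theorem of \cite{bfmot12} (Theorem~7.5.2) to produce a non-repelling periodic point in $T$ with multiplier $\ne 1$. This contradicts Theorem~\ref{t:phd-info}, which says bounded components of $\thu(\Pc_\la)\sm\Pc_\la$ are of Siegel capture or queer type. The key external input is the plane fixed-point theorem, which your approach never invokes; it is what makes the argument local to $f$ and sidesteps any limit analysis.
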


\begin{proof}
Let $f\in\Fc_\lambda, |\lambda|\le 1$, belong to a bounded
potentially renormalizable component $\Wc_f=\Wc$.
Assume that $f$ is of capture type and is immediately renormalizable.
The set $K^*$ and all its pullbacks contain lots of repelling
periodic points and their preimages (such points are dense in the
corresponding quadratic-like Julia set $J^*$ and its pullbacks).
Obviously, we can choose two rays $R_x$, $R_y$ with arguments $\al$ and
$\be$ landing at preperiodic points $x, y\in Q_f$ such that $f(x)=f(y)$
and $f(R_x)=f(R_y)$. The union $R_x\cup Q_f\cup R_y$ cuts the plane
into two pieces denoted by $L$ and $T$. Assume that $K^*\subset L$.
Since $f|_{K^*}$ is two-to-one, we may then assume that the arc $(\al,
\be)\subset \uc=\R/\Z$ is of length $\frac23$ and contains all angles,
whose rays are contained in $L$.

Set $F=f^{\circ (n_f-1)}$.
Clearly, $Z=F(Q_f)\subset T$ (observe that $f(Z)=K^*$
while $Z$ and $K^*$ are disjoint) because, by construction, $f(T)$ covers
$K^*$ while $f^{-1}(K^*)$ consists of two components, $K^*$ and $Z$. By
Theorem 7.5.2 of \cite{bfmot12}, this implies that $T$ contains either
a repelling $F$-fixed cutpoint of $J(f)$ or a
non-repelling $F$-fixed point.
A particular case of \cite[Theorem 7.5.2]{bfmot12} that is enough in our case
states roughly the following: if a continuum $T$ is carved out of $K(f)$ by several
``cut continua'' mapping ``towards'' $T$ under $F$, then
$T$ contains a repelling fixed cutpoint or a non-repelling fixed point for $F$.
This contradicts Corollary~\ref{c:boring} and completes the proof.
\end{proof}

We are ready to prove Theorem C.

\begin{proof}[Proof of Theorem C]
Claim (1) of Theorem C is established in Corollary~\ref{c:unbound}.
Claim (2) of Theorem C follows from Lemma~\ref{l:no-ql-capt}.

Consider now claim (3) of Theorem C. Let $f\in \Wc$ be a potentially
renormalizable polynomial of queer type. By definition, this implies
that $\om_2(f)\in J(f)$. Clearly, if $\om_2(f)\in X(f)$, then $f$ is
not immediately renormalizable (by Lemma~\ref{l:motiv}, if $f$ is
immediately renormalizable, then its quadratic-like Julia set
coincides with $X(f)$, and $\om_1(f)$ is the unique critical point
in $\thu(X(f))$). Now, assume that $\om_2(f)\notin X(f)$. Since
$\thu(X(f))\sm X(f)$ is disjoint from $J(f)$, then $\om_2(f)\notin
\thu(X(f))$. Suppose that $f(\om_2(f))\in X(f)$. Then $f(\om_2(f))$
has at least four preimages (counting with multiplicities): at least
two in $X(f)$ and $\om_2(f)$ counted twice, a contradiction. Hence
$f(\om_2(f))\notin X(f)$, and, since $\thu(X(f))\sm X(f)$ is
disjoint from $J(f)$, then $f(\om_2(f))\notin \thu(X(f))$.

It follows now from Lemma~\ref{l:suffnotin} that the set
$\thu(X(f))$ is a component of the set $f^{-1}(f(\thu(X(f)))$.
Moreover, by Corollary~\ref{c:boring}, the polynomial $f$ does not
have non-repelling periodic points except, possibly, for $0$. Hence
Theorem B applies to $\thu(X(f))$ and shows that $f$ is immediately
renormalizable. By Lemma~\ref{l:no-ql-capt}, all this implies that
the entire orbit of $\om_2(f)$ is disjoint from $\thu(X(f))$ as
desired.

On the other hand, by Lemma~\ref{l:no-ql-capt} a bounded potentially
renormalizable component of Siegel capture type cannot contain an
immediately renormalizable polynomial. Since by
Theorem~\ref{t:phd-info} a potentially renormalizable component is
either of queer type or of Siegel capture type, this completes the
proof of Theorem C.
\end{proof}

\section*{Acknowledgements}
We are grateful to M. Lyubich for useful discussions.
We would also like to thank the referee for extensive suggestions,
which have led to a significant improvement of the paper.

The first and the third named authors were partially supported by NSF
grant DMS--1201450. The second named author was partially  supported by
NSF grant DMS-0906316. The fourth named author was partially supported
by the Dynasty Foundation grant, RFBR grants 13-01-12449, 13-01-00969.
The research comprised in Theorem B was funded by RScF grant 14-21-00053.


\begin{thebibliography}{9999}





\bibitem[BFMOT12]{bfmot12} A. Blokh, R. Fokkink, J. Mayer, L. Oversteegen, E.
Tymchatyn, \emph{Fixed point theorems for plane continua with
applications}, Memoirs of the American Mathematical Society,
\textbf{224} (2013), no. 1053




\bibitem[BOPT14]{bopt14} A. Blokh, L. Oversteegen, R. Ptacek, V. Timorin,
\emph{The main cubioid}, Nonlinearity, \textbf{27} (2014), 1879-1897

\bibitem[BOPT14a]{bopt14a} A. Blokh, L. Oversteegen, R. Ptacek, V. Timorin,
\emph{Complementary components to the cubic Principal Hyperbolic
Domain}, preprint, arxiv:1411.2535 (2014)



\bibitem[BrHu]{BrHu2}
B. Branner, J. Hubbard, ``The iteration of cubic polynomials, Part
II: patterns and parapatterns'', Acta Mathematica \textbf{169}
(1992), 229--325.

\bibitem[BuHe01]{BH}
X. Buff, C. Henriksen, \emph{Julia Sets in Parameter Spaces},
Commun. Math. Phys. \textbf{220} (2001), 333 -- 375



\bibitem[DH8485]{DH}
A. Douady and J. Hubbard, \emph{\'Etude dynamique des polyn\^omes
complex I \& II} Publ. Math. Orsay (1984--85)

\bibitem[DH85]{DH85b}
A. Douady and J. Hubbard, \emph{On the dynamics of polynomial-like
mappings}, Ann. Sci. \'Ec. Norm. Sup. \textbf{18}, 287--343 (1985)





\bibitem[Hub93]{Hu}
J. H. Hubbard, ``Local connectivity of Julia sets and bifurcation
loci: three theorems of Yoccoz'', in: Topological Methods in Modern
Mathematics, Publish or Perish (1993)

\bibitem[Lev91]{lev91} G. Levin, \emph{On Pommerenke's inequality for the eigenvalues of
fixed points}, Colloq. Math. \textbf{62} (1991), 167–-177.

\bibitem[Lyu83]{lyu83} M. Lyubich, \emph{Some typical properties of the dynamics of
rational mappings}, Russian Math. Surveys \textbf{38} (1983), no. 5,
154--155.

\bibitem[Ma\~n85]{Ma}
R. Ma\~n\'e, \emph{Hyperbolicity, Sinks and Measure in One
Dimensional Dynamics}, Commun. Math. Phys. \textbf{100} (1985),
495--524.

\bibitem[Ma\~n93]{man93} R.~Ma\~{n}\'{e}, \emph{On a theorem of {F}atou},
Bol. Soc. Bras. Mat. \textbf{24} (1993), 1--11.

\bibitem[MSS83]{MSS}
R. Ma\~n\'e, P. Sad, D. Sullivan, \emph{On the dynamics of rational
maps}, Ann. Sci. \'Ecole Norm. Sup. (4) \textbf{16} (1983), no. 2,
193--217.

\bibitem[McM94]{Mc}
C. McMullen, \emph{Complex Dynamics and Renormalization}, Princeton
University Press, 1994.

\bibitem[McM00]{Mc00}
C. McMullen, \emph{The Mandelbrot set is universal}, In: The
Mandelbrot Set, Theme and Variations, Ed: Tan Lei.
London Mathematical Society Lecture Note Series, 274, 1--18.
Cambridge Univ. Press.

\bibitem[Pet93]{pet93} C. Petersen, \emph{On the Pommerenke-Levin-Yoccoz inequality},
Ergod. Th. and Dynam. Sys. \textbf{13} (1993), 785-–806.

\bibitem[PeTa09]{peta09} C. Petersen, Tan Lei, \emph{Analytic coordinates recording cubic dynamics},
in: Complex Dynamics, Families and Friends, ed: D. Schleicher, A K Peters (2009).

\bibitem[PiTa04]{pita04} K. Pilgrim, Tan Lei, \emph{Spinning deformations of rational maps},
Conformal Geometry and Dynamics \textbf{8} (2004), 52-–86.

\bibitem[Pom86]{pom86} Ch. Pommerenke, \emph{On conformal mapping and iteration of rational
functions}, Complex Variables Theory Appl. \textbf{5} (1986),
117–-126.











\bibitem[Zak99]{Z}
S. Zakeri, \emph{Dynamics of Cubic Siegel Polynomials}, Commun.
Math. Phys. \textbf{206} (1999), 185--233

\end{thebibliography}
\end{document}